\newtheorem{thm}{Theorem}[section]
\newtheorem{lemma}[thm]{Lemma}
\newtheorem{rmk}[thm]{Remark}
\newtheorem{prop}[thm]{Proposition}
\newtheorem{cor}[thm]{Corollary}
\newtheorem{defn}[thm]{Definition}
\newtheorem{question}[thm]{Question}
\newtheorem{example}[thm]{Example}
\newtheorem{notation}[thm]{Notation}
\newtheorem{conjecture}[thm]{Conjecture}
\newcommand{\conv}{\mbox{\rm conv}}
\newcommand{\LCM}{\mbox{\rm LCM}}
\newcommand{\ann}{\mbox{\rm ann}}
\newcommand{\tor}{\mbox{\rm Tor}}
\newcommand{\depth}{\mbox{\rm depth}}
\newcommand{\pdim}{\mbox{\rm pdim}}
\newcommand{\reg}{\mbox{\rm reg}}
\newcommand{\hgt}{\mbox{\rm ht}}
\DeclareMathOperator{\codim}{codim}
           \newcommand{\supp}{\mbox{\rm Supp}}
\newcommand{\sk}{\mbox{$\mathsf{k}$}}
\begin{document}

\title[A comparison of regularity of monomial ideals and their integral closure]{A Comparison of the Regularity of Certain Classes of Monomial Ideals and Their Integral Closures}

\author{Omkar Javadekar}
\address{Chennai Mathematical Institute, Siruseri, Tamilnadu 603103. India}
\email{omkarj@cmi.ac.in}

\subjclass{{{13D02,13A02,13B22,13F55}}}

\keywords{Integral closure, Castelnuovo--Mumford regularity, Monomial ideals}

\begin{abstract}
  Let $S = \mathsf{k}[x_1, \ldots, x_n]$,  $I$ be an ideal of $S$, and  $\bar{I}$ denote its integral closure. A conjecture of Küronya and Pintye states that for any homogeneous ideal $I$ of $S$, the inequality $\operatorname{reg}(\bar{I}) \leq \operatorname{reg}(I)$ holds, where $\operatorname{reg}(\_)$ denotes the Castelnuovo--Mumford regularity. In this article, we prove the conjecture for certain classes of monomial ideals.
\end{abstract}

\maketitle

\section{Introduction}

The computation of the Castelnuovo--Mumford regularity of various classes of homogeneous ideals over a polynomial ring has been an active area of research in recent years. Although the minimal graded free resolutions of homogeneous ideals in a polynomial ring are finite, computing these resolutions, or even the weaker task of determining the (graded) Betti numbers and the Castelnuovo--Mumford regularity, remains a complex problem. Since monomial ideals form a large and well-structured subclass of homogeneous ideals, it is common for researchers to first establish results for monomial ideals before addressing the more general case.

The main problem of interest in this article is the following conjecture, due to K\"{u}ronya and Pintye~\cite{KP13}:

\begin{conjecture}\label{conj:integral-closure}
    Let $I$ be a homogeneous ideal of $S = \sk[x_1, \ldots, x_n]$. If $\bar{I}$ denotes the integral closure of $I$, then
    \[
        \reg(\bar{I}) \leq \reg(I).
    \]
\end{conjecture}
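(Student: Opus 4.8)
Since Conjecture~\ref{conj:integral-closure} is open in general, the plan is not to settle it outright but to establish it for classes of monomial ideals carrying enough combinatorial structure; here is the framework I would use, together with the step where extra hypotheses become unavoidable.

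\emph{Reductions and tools.} One first reduces to the case where $I$ is a monomial ideal, so that $\bar I$ is again monomial and $\bar I=\langle \mathbf x^{\mathbf b}:\mathbf b\in \mathrm{NP}(I)\cap\BN^{n}\rangle$, where $\mathrm{NP}(I)=\conv\{\mathbf a:\mathbf x^{\mathbf a}\in I\}+\BR_{\ge 0}^{n}$ is the Newton polyhedron. I would record three facts at the outset: (i) $\sqrt{\bar I}=\sqrt I$, so $\bar I/I$ is supported on $V(I)$; (ii) the generating degrees of $\bar I$ are bounded above by those of $I$ (a standard consequence of the polyhedral description), so already at the level of generators $\reg(\bar I)$ is dominated by $\reg(I)$ and only the behaviour of higher syzygies is at issue; (iii) the combinatorial description of the multigraded Betti numbers $\beta_{i,\mathbf b}(S/J)$ of a monomial ideal $J$ through the reduced homology of the upper Koszul simplicial complexes $K^{\mathbf b}(J)$, which lets one read off $\reg(S/J)=\max\{|\mathbf b|-i:\beta_{i,\mathbf b}(S/J)\ne 0\}$.

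\emph{Core strategy.} Fix $\mathbf b$ and $i$ with $\beta_{i,\mathbf b}(S/\bar I)\ne 0$ and $|\mathbf b|-i=\reg(S/\bar I)$. The inclusion $I\subseteq\bar I$ induces a containment $K^{\mathbf b}(I)\subseteq K^{\mathbf b}(\bar I)$ of simplicial complexes and hence a map on reduced homology; the aim is to transport the homology class witnessing $\reg(S/\bar I)$ to a nonzero class on some $K^{\mathbf b'}(I)$ with $|\mathbf b'|-i'\ge|\mathbf b|-i$. For the intended families this is carried out using their structure: if $I$ has a linear resolution it suffices to show $\bar I$ does too, i.e.\ that $\bar I$ has linear quotients or is componentwise linear, which can be read off the facet structure of $\mathrm{NP}(I)$ when the class is favourable (for instance when $\bar I$ turns out to be polymatroidal); if $I$ is generated in small degree or involves few variables, $\mathrm{NP}(I)$ has few facets, $\bar I$ admits an explicit Lyubeznik or Taylor resolution, and the two regularities can be compared directly; for ``$c$-bounded'' ideals and for vertex-weighted analogues of squarefree ideals, $\bar I$ is obtained from an already integrally closed ideal by a controlled monomial substitution of the form $x_{i}\mapsto x_{i}^{c_{i}}$, under which regularity changes predictably, squarefree ideals being integrally closed providing the base case.

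\emph{Main obstacle.} The difficulty is that the map $\widetilde H_{\ast}(K^{\mathbf b}(I))\to\widetilde H_{\ast}(K^{\mathbf b}(\bar I))$ is in general neither injective nor surjective: passing to $\bar I$ can create genuinely new top syzygies, so Betti tables cannot be compared entry by entry, and controlling the single \emph{extremal} Betti number of $\bar I$ is exactly the open conjecture in disguise. Every case therefore hinges on a structural mechanism that ``pushes down'' an extremal syzygy of $\bar I$ to an at-least-as-extremal syzygy of $I$ --- typically by realising the relevant part of $\bar I$ as a contraction of $I$ along a facet of $\mathrm{NP}(I)$, or by an inductive d\'evissage on the generators of $\bar I$ not lying in $I$ via the exact sequences $0\to I\to\bar I\to\bar I/I\to 0$ and $0\to\bar I/I\to S/I\to S/\bar I\to 0$, which give $\reg(\bar I)\le\max(\reg I,\reg(\bar I/I)+1)$ and reduce everything to the bound $\reg(\bar I/I)\le\reg(I)-1$. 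One secondary point to watch is that the regularity of a monomial ideal can depend on $\ch\sk$, so the comparison bounds should be produced characteristic-freely (or the dependence flagged).
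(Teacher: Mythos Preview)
The statement you are addressing is labeled as a \emph{conjecture} in the paper and is not proved there in full generality; the paper establishes it only for four specific families (monomial ideals in $\sk[x,y]$, complete intersections, height-$3$ Gorenstein monomial ideals, and stable ideals), each by an ad hoc argument exploiting an explicit resolution or the structure of $\mathcal G(I)$ available in that class. Your proposal is likewise a programme rather than a proof, and you say so, but the classes you aim at (ideals with linear resolution, $c$-bounded ideals, vertex-weighted squarefree ideals) and the tools you emphasise (upper Koszul simplicial complexes, the exact sequences for $\bar I/I$) have essentially no overlap with what the paper actually does. Where the paper succeeds, the mechanism is not ``pushing down an extremal syzygy of $\bar I$ along a facet of $NP(I)$''; for complete intersections and the height-$3$ Gorenstein case it is the elementary observation that $\LCM(\mathcal G(\bar I))$ divides $\LCM(\mathcal G(I))$, combined with the fact (\Cref{rmk:upToCodim}) that for a module of codimension $c$ the function $i\mapsto\max\{j:\beta_{i,j}\ne 0\}$ is strictly increasing on $[0,c]$, so $\reg(S/\bar I)$ is witnessed at some homological degree $\ge c$ and is therefore bounded by $\deg\LCM(\mathcal G(I))-c=\reg(S/I)$. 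For stable ideals the argument is entirely different: one shows $\bar I$ is again stable and then compares maximal generating degrees via the Eliahou--Kervaire formula.

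One concrete gap in your outline: your item (ii), that the maximal generating degree of $\bar I$ is bounded by that of $I$ ``as a standard consequence of the polyhedral description,'' is neither standard nor obvious, and is in fact the entire content of the paper's proof in the stable case (Theorem~\ref{thm:Stable-ideals}). There, after reducing to $d(\bar I)\le d(I)$, a careful variable-swapping argument inside an equation $(x^B)^r=\bigl(\prod_i x^{A_{j_i}}\bigr)x^D$ is needed, and it uses the stability hypothesis in an essential way. The naive polyhedral argument only yields $B=P+C$ with $P\in\conv\{A_j\}$ and $C\in\BR^n_{\ge 0}$, $|C|\ge 1$; since no single coordinate of $C$ need be $\ge 1$, you cannot simply strip off a variable and stay in $NP(I)$. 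If (ii) were as immediate as you suggest, the stable case would be trivial and the paper's Section~\ref{sec:Stable} would be superfluous; as written, your framework correctly isolates the higher-syzygy difficulty but assumes away a nontrivial step already at the level of generators.
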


We provide a positive answer to the above conjecture for certain classes of monomial ideals of $S$. To the best of our knowledge, not much is known about the conjecture. The classes of ideals for which it has been studied primarily include monomial ideals arising as powers of edge ideals of certain graphs, powers of some square-free monomial ideals, and edge ideals of weighted oriented graphs. More details can be found in \cite{MV22, MVZ24}. In a related article \cite{kumar2021regularity}, a comparison between the regularities of ordinary powers, symbolic powers, and the integral closures of powers of certain classes of ideals is carried out. In \cite{fakhari2024some}, several inequalities concerning the regularity of integral closures are obtained. In \cite{swanson2023differences}, it is shown that the regularity of the second power of square-free monomial ideals can be arbitrarily larger than the regularity of their integral closures.

In this work, we study Conjecture~\ref{conj:integral-closure} for the following families of ideals:
\begin{enumerate}[label={\rm (\alph*)}]
    \item Monomial ideals in $\sk[x, y]$ and $\sk[x, y, z]$
    \item Complete intersection monomial ideals
    \item Gorenstein monomial ideals of height $3$
    \item Stable monomial ideals
\end{enumerate}

We show that Conjecture~\ref{conj:integral-closure} has a positive answer for the ideals listed in (b), (c), and (d) above, as well as for all monomial ideals in $\sk[x, y]$. We also show that Conjecture~\ref{conj:integral-closure} holds for all monomial ideals in $\sk[x, y, z]$ if and only if it holds for height $2$ monomial ideals in $\sk[x, y, z]$. Along the way, we also establish some results about the Betti numbers of $\bar{I}$ for the aforementioned families of ideals.

As mentioned earlier, the computation of $\reg(I)$ is a rather difficult task. Moreover, the homological properties of $\bar{I}$ are not well understood. This makes Conjecture~\ref{conj:integral-closure} quite hard to tackle. The idea behind choosing the classes (a)--(d) is that one has at least some information about either $\reg(I)$ or the generating set of the integral closure $\bar{I}$. For instance, for stable monomial ideals $I$, one has an explicit resolution due to Eliahou--Kervaire~\cite{EK90}, and hence an explicit formula for their regularity, which is equal to the maximum degree of a minimal generator of $I$. 

We also note that commonly used techniques, such as passing to polarization, do not help here, as the integral closure operation does not behave well with polarization. In conclusion, at present, we do not have any general techniques to resolve Conjecture~\ref{conj:integral-closure}.

The organization of the article is as follows. \Cref{sec:prelim} contains the necessary definitions and known results. In \Cref{sec:2-3var}, we study Conjecture~\ref{conj:integral-closure} for monomial ideals in $\sk[x, y]$ and $\sk[x, y, z]$. Sections~\ref{sec:CI}, \ref{sec:Gor}, and \ref{sec:Stable} are devoted to proving Conjecture~\ref{conj:integral-closure} for complete intersection monomial ideals, height $3$ Gorenstein monomial ideals, and stable monomial ideals, respectively. Finally, in \Cref{sec:misc}, we make a few remarks about possible directions for further study and suggest additional classes of ideals for which the validity of the conjecture could be tested.

\section*{Acknowledgments}

The author would like to thank his advisor, H.~Ananthnarayan, for his support during the Ph.D. program and for several valuable discussions on graded free resolutions. A significant portion of this work was carried out during the author’s Ph.D. years. The author is currently supported by a Postdoctoral Fellowship at the Chennai Mathematical Institute, and he gratefully acknowledges additional support from the Infosys Foundation.

\section{Preliminaries}\label{sec:prelim}
Throughout this article, we use $S$ to denote the polynomial ring $\sk[x_1,\ldots, x_n]$ over a field $\sk$. Also, for convenience, while dealing with cases $n=2,3$, we shall use variables $x,y,z$ instead of $x_1,x_2,x_3$.

We start by recalling some basic definitions and facts about graded free resolutions. For more details on the same, we refer the reader to \cite{PeevaBook}.
\begin{defn}\label{defn:graded-stuff} Consider the standard graded polynomial ring $S=\sk[x_1,\ldots, x_n]$ with $\deg(x_i)=1$ for all $i$. Let $M$ be a finitely generated $\mathbb Z$-graded $S$-module. 
    \begin{enumerate}[label={\rm (\alph*)}]
        \item The $i^{th}$ Betti number of $M$, denoted $\beta_i(M)$, is defined as
        $$\beta_i(M)=\dim_{\mathsf k}\left(\tor_i^S(M,\sk)\right).$$
        \item The $(i,j)^{th}$ graded Betti number of $M$, denoted $\beta_{i,j}(M)$ is defined as $$\beta_{i,j}(M)=\dim_{\mathsf k}\left(\tor_i^S(M,\sk)_j\right).$$
        \item When $M\neq 0$, the (Castelnuovo--Mumford) regularity of $M$, denoted $\reg(M)$ is defined as 
        $$\reg(M)= \max\{j-i\mid \beta_{i,j}(M) \neq 0\}.$$
        Note that by the well-known Hilbert syzygy theorem, the maximum exists.
    \end{enumerate} 
\end{defn}

An ideal $I \subseteq S$ is called \emph{graded} (or \emph{homogeneous}) if it has a minimal generating set consisting of homogeneous polynomials. In this article, we use the terms \emph{graded ideal} and \emph{homogeneous ideal} interchangeably.
 A \emph{monomial ideal} of $S$ is an ideal that has a generating set consisting of monomials. Every monomial ideal $I$ has a unique minimal monomial generating set, and we denote it with $\mathcal G(I)$. In this article, we shall be working primarily with the invariants introduced in \Cref{defn:graded-stuff} above for $M=I$, a monomial ideal of $S$. 

One can also define a finer grading on $S$, often called multigrading or $\mathbb Z_{\geq 0}^n$-grading,  where the graded components correspond to the set of all monomials in $S$. In this case, we set degree of
the monomial $x_1^{a_1}x_2^{a_2}\cdots x_n^{a_n}$ to be $(a_1,a_2,\ldots,a_n)$. Then every $\mathbb Z^n$-graded $S$-module $M$ has a $\mathbb{Z}^n$-graded minimal free resolution. 
Given any $i \geq 0$ and $C\in \mathbb Z^n$, we thus define $$\beta_{i,C}(M)= \dim_{\mathsf k}\left(\left(\tor_i^S(M, \sk\right)_{C}\right).$$

Given any $A=(a_1,\ldots, a_n)\in \mathbb Z_{\geq 0}^n$, for the monomial $x^A=x_1^{a_1}x_2^{a_2}\cdots x_n^{a_n}$, we define $\supp(x^A)=\{i \mid a_i>0\}$.

For $A=(a_1,\ldots,a_n), B=(b_1,\ldots,b_n)$ with $a_i, b_j \in \mathbb R_{\geq 0}$, we say that $A \preccurlyeq B$ if $a_i \leq b_i$ for all $i$. Note that for monomials $x^A, x^B$, we have $x^A \mid x^B$ if and only if $A \preccurlyeq B$.

\begin{rmk}\label{rmk:lcmIsMaxDegree}
    Let $I$ be a monomial ideal with the minimal monomial generating set $\mathcal G(I)$, and $x^B\coloneqq\LCM\{x^A \mid x^A \in \mathcal G(I)\}$. Then from the Taylor's resolution, we see that $\beta_{i,C}(S/I)=0$ if $C \not \preccurlyeq B$.
\end{rmk}

The following serves as a key tool in proving \Cref{conj:integral-closure} for complete intersection and height 3 Gorenstein monomial ideals. 
\begin{rmk}\label{rmk:upToCodim}
    Let $S=\sk[x_1,\ldots, x_n]$, and $M$ a finitely generated graded $S$-module. Then the function $i \mapsto \max\{j \mid \beta_{i,j}^R(M) \neq 0\}$ is strictly increasing for $ 0 \leq i \leq \codim(M)\coloneqq \hgt(\ann_S(M))$.
\end{rmk}

We now recall the definition of integral closure followed by facts and known results about them. We refer the interested reader to \cite{HS06} for more details on integral closures. 
\begin{defn}
    Let $R$ be a Noetherian ring, and $I\subseteq R$ be an ideal. An element $x\in R$ is said to be \emph{integral over $I$} if there exist $n \in \mathbb N$, and $a_1,\ldots, a_n$, with $a_i\in I^i$ such that 
    \[ x^n + a_1 x^{n-1}+ \cdots + a_{n-1}x+a_n=0.\]
The set of all integral elements over $I$ forms an ideal. We call it the \emph{integral closure of $I$}, and denote it by $\bar{I}$.
 An ideal $I$ is said to be \emph{integrally closed} if $I=\bar I$.
\end{defn}

\begin{rmk}[\cite{HS06}, Section 1.4] Let $S=\sk[x_1,\ldots,x_d]$, and $I$ be a monomial ideal of $S$. Then
\begin{enumerate}[label={\rm (\alph*)}]
    \item $\bar I$ is a monomial ideal. 
    \item  A monomial $f\in \bar I$ if and only if there exists $r\in\mathbb N$, and monomials $f_1,\ldots,f_r\in I$ such that $f^r=\prod\limits_{i=1}^r f_i$.
\end{enumerate}
\end{rmk}
Thus, when $I$ is a monomial ideal, testing whether a given monomial belongs to the integral closure or not is relatively easy. 
 We will be using the part (b) of the remark above throughout the article. 

Given any subset $U$ of $\mathbb R^n$, let $\conv(U)$ denote the convex hull of $U$ in $\mathbb R^n$. The \emph{exponent vector} of a monomial $x_1^{a_1}x_2^{a_2}\cdots x_n^{a_n}$ is defined as $(a_1,\ldots,a_n)$. If $I$ is a monomial ideal of $S$, then the set $\{A \in \mathbb Z_{\geq 0}^n \mid x^A \in I\}$ is called the \emph{exponent set of $ I$}. 
The convex hull of all exponent vectors of $I$, denoted $NP(I)$, is called the \emph{Newton polyhedron of $I$}. We denote by $\mathcal V(I)$ the minimal subset of $\mathcal G(I)$ such that $NP(I)=\conv(\mathcal V(I))+ \mathbb R^n_{\geq 0}$, and call it the set of \emph{extreme points} or \emph{corner points} of $NP(I)$.  We define $\delta(I)=\max\{\deg(x^A)\mid A \in \mathcal V(I)\}$.

If $I$ is a monomial ideal, there is a nice geometric description of $\bar I$. We recall it in the following remark, since it will be used later sections.  

\begin{rmk}[\cite{HS06}, Proposition 1.4.6]\label{rmk:convexhullinterpretation}{\rm 
      For a monomial ideal $I$, the exponent set of $\bar I$ equals the set of all integer vectors that belong to the convex hull of exponent vectors of $I$.
}\end{rmk}

In \cite{Ho22} Hoa provided bounds for the regularity of powers of $\bar I$ in terms of the height of $I$. 
\begin{rmk}[cf.~\cite{Ho22}, Theorem 2.7]\label{thm:Hoa-bound-integral-closure}
    Given any monomial ideal $I\subseteq S$, and for all $m \in \mathbb N$, we have 
    $$\delta(I) m \leq \reg(\overline{ I^m}) \leq \delta(I) m+ \dim(S/I). $$
\end{rmk}
Observe that when $m = 1$, the above result implies that $\delta(I) \leq \reg(\bar{I}) \leq \delta(I) + \dim(S/I)$, which provides strong bounds, particularly when $\dim(S/I)$ is small.

\section{Monomial ideals in \(\sk[x,y]\) and \(\sk[x,y,z]\)}\label{sec:2-3var}

We begin with some general results that hold over arbitrary polynomial rings. In the next proposition, we note that that the Conjecture \ref{conj:integral-closure} holds if $I$ is $\langle x_1, \ldots, x_n\rangle$-primary.

\begin{prop}\label{prop:height-n-reg}
    Let $I\subseteq S$ be a graded ideal such that $\sqrt{I}= \langle x_1, \ldots, x_n\rangle$. Then $\reg(\bar I)\leq \reg(I)$.
\end{prop}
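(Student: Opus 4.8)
The plan is to reduce everything to the zero-dimensional situation and then invoke the elementary formula for the regularity of a finite-length module. Write $\m=\langle x_1,\ldots,x_n\rangle$. Since $\sqrt I=\m$, the ideal $I$ is $\m$-primary, so $S/I$ is Artinian, i.e.\ a nonzero graded $S$-module of finite length; set $s\coloneqq\max\{\,j : (S/I)_j\ne 0\,\}$. As $I\subseteq\bar I$ and $\bar I$ is again a homogeneous ideal, $S/\bar I$ is a graded quotient of $S/I$; in particular it has finite length, and $(S/\bar I)_j$ is a quotient of $(S/I)_j$ in every degree $j$, so $(S/\bar I)_j=0$ for all $j>s$.

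The one external input is the standard fact that a nonzero graded module $M$ of finite length satisfies $\reg(M)=\max\{\,j : M_j\ne 0\,\}$: being $\m$-torsion, $M$ has $H^0_\m(M)=M$ and $H^i_\m(M)=0$ for $i>0$, so the local-cohomology description of regularity collapses to the top nonzero degree of $M$. Applying this to $S/I$ and to $S/\bar I$, and using the degreewise surjection from the previous step, gives $\reg(S/\bar I)\le s=\reg(S/I)$. Finally, for any proper nonzero graded ideal $J$ the exact sequence $0\to J\to S\to S/J\to 0$ yields $\reg(J)=\reg(S/J)+1$; using this for $J=I$ and $J=\bar I$ we conclude
\[
  \reg(\bar I)\;=\;\reg(S/\bar I)+1\;\le\;\reg(S/I)+1\;=\;\reg(I).
\]

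I do not anticipate any genuine obstacle: once one observes that $S/I$ is Artinian the statement is essentially forced, the only real ingredient being the finite-length regularity formula. (In the monomial case one could instead read it off \Cref{thm:Hoa-bound-integral-closure}, which for $\dim(S/I)=0$ gives $\reg(\bar I)=\delta(I)$, together with $\delta(I)\le\reg(I)$; but the argument above is uniform over all graded ideals.) The point worth emphasising is that this proof lives entirely in dimension zero — it uses crucially that the relevant quotients are killed by a power of $\m$ — so it gives no information when $\dim(S/I)>0$, where one would instead have to control the higher local cohomology of $S/I$ and $S/\bar I$ together with their interaction under the surjection $S/I\onto S/\bar I$. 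That is where the substantive work of the paper will have to take place.
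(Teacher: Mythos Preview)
Your argument is correct and takes a genuinely different route from the paper's. The paper invokes \Cref{thm:Hoa-bound-integral-closure} with $m=1$ and $\dim(S/I)=0$ to get $\reg(\bar I)=\delta(I)$, and then observes that $\delta(I)\le\reg(I)$ because $I$ has a minimal generator in degree $\delta(I)$. You instead use the surjection $S/I\onto S/\bar I$ together with the socle-degree formula $\reg(M)=\max\{j:M_j\neq 0\}$ for finite-length graded modules. The trade-off is that the paper's proof is a one-liner once Hoa's bound is in hand, but that bound (and the invariant $\delta(I)$ itself) is stated only for monomial ideals, so strictly speaking the paper's argument covers less than the proposition asserts; your approach is self-contained, uses only the local-cohomology description of regularity, and applies uniformly to all graded $\m$-primary ideals as stated. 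You even anticipate the paper's line of reasoning in your final parenthetical remark.
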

\begin{proof}
    Since $\sqrt{I}=\langle x_1, \ldots, x_n\rangle$, we see that $\dim(S/I)=0$. Thus, taking $m=1$ in Remark \ref{thm:Hoa-bound-integral-closure}, we get that $\delta(I)=\reg(\bar I)$. Since $I$ has a minimal generator in degree $\delta(I)$, we see that $\reg(I)\geq \delta(I)$, which gives the required inequality.
\end{proof}

In light of the proposition above, we may always assume that $\hgt(I)<n$. Note that since we are working over a polynomial ring, which is an integral domain, each nonzero ideal is of height at least one. Observe that for $S=\sk[x,y]$, this says that we only need to consider the height one ideals. With the help of the following proposition, we get that as far as \Cref{conj:integral-closure} is concerned, it is enough to consider ideals of height $\geq 2$. 

\begin{prop}\label{prop:reducing-hgt-1-to-hgt-at-least-2}
    Let $I$ be a monomial ideal of $S=\sk[x_1,\ldots, x_n]$. If $\hgt(I)=1$, then there exists a variable $x_j$ such that $I=x_j I_1$ and $\bar I=x_j \overline{I_1}$. In particular, $\reg(\bar I)\leq \reg(I)$ if and only if $\reg(\overline{I_1})\leq \reg(I_1)$.
\end{prop}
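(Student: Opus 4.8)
The plan is to reduce the height-one case to a lower-dimensional situation by factoring out a common variable. The key observation is combinatorial: if $\hgt(I) = 1$, then $I$ has a minimal prime of height one, which (since $I$ is a monomial ideal) must be of the form $\langle x_j \rangle$ for some variable $x_j$. I claim this forces $x_j$ to divide every element of $\mathcal{G}(I)$. Indeed, $\langle x_j \rangle \supseteq \sqrt{I} \cap \text{(stuff)}$... more carefully: $\langle x_j \rangle$ being a minimal prime of $I$ means $\langle x_j \rangle \in \ass(S/I)$ after taking radicals, equivalently $\langle x_j\rangle \supseteq I$ (since $\langle x_j \rangle$ is itself prime and minimal over $I$, and every associated prime of a monomial ideal is generated by a subset of the variables, a height-one such prime containing $I$ must contain each generator). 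Hence every monomial in $\mathcal{G}(I)$ is divisible by $x_j$, so we may write $I = x_j I_1$ where $I_1 = (x_j^{-1} m : m \in \mathcal{G}(I))$ is again a monomial ideal.

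Next I would verify $\bar{I} = x_j \overline{I_1}$. This follows cleanly from Remark~\ref{rmk:convexhullinterpretation}: the exponent set of $I$ is the exponent set of $I_1$ shifted by $e_j$ (the $j$-th standard basis vector), and taking convex hulls commutes with translation, as does intersecting with the integer lattice $\mathbb{Z}^n$ (translation by an integer vector is a lattice automorphism). Therefore the exponent set of $\bar{I}$ is the exponent set of $\overline{I_1}$ shifted by $e_j$, which says exactly $\bar{I} = x_j\,\overline{I_1}$. Alternatively one can argue directly from the criterion that a monomial $f \in \bar{J}$ iff $f^r = \prod_{i=1}^r f_i$ with $f_i \in J$: writing $f = x_j g$, one checks $f \in \bar{I}$ iff $g \in \overline{I_1}$ by matching up the factorizations.

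Finally, for the regularity statement I would use that multiplication by a variable (more precisely, the isomorphism of graded modules $I \cong I_1(-1)$ given by $m \mapsto x_j^{-1} m$, since $x_j$ is a nonzerodivisor) simply shifts all graded Betti numbers: $\beta_{i,j}(I) = \beta_{i,j-1}(I_1)$, hence $\reg(I) = \reg(I_1) + 1$. The same applies to $\bar{I}$ and $\overline{I_1}$, giving $\reg(\bar{I}) = \reg(\overline{I_1}) + 1$. Subtracting, $\reg(\bar{I}) \leq \reg(I)$ if and only if $\reg(\overline{I_1}) \leq \reg(I_1)$, as desired.

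I expect the main (very mild) obstacle to be pinning down the first step — justifying that a height-one monomial ideal is divisible by a single variable — rigorously; everything after that is formal bookkeeping about translations of exponent sets and degree shifts in resolutions. One should also note in passing that $I_1$ may again have height one, in which case the reduction can be iterated; but since $\mathcal{G}(I_1)$ has strictly smaller total degree, or since one eventually exhausts the available variable powers, the process terminates, though for the stated "if and only if" this iteration is not even needed.
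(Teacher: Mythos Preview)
Your proposal is correct and follows essentially the same route as the paper: identify a height-one minimal prime $\langle x_j\rangle$ of the monomial ideal to factor $I=x_jI_1$, use the Newton-polyhedron translation $NP(I)=e_j+NP(I_1)$ (your Remark~\ref{rmk:convexhullinterpretation} argument) to get $\bar I=x_j\overline{I_1}$, and read off the regularity shift $\reg(I)=\reg(I_1)+1$, $\reg(\bar I)=\reg(\overline{I_1})+1$. The only difference is cosmetic ordering---the paper states the regularity consequence first and then produces $x_j$---and your write-up actually supplies slightly more justification (the lattice-translation remark and the graded isomorphism $I\cong I_1(-1)$) than the paper does.
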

\begin{proof}
    We first note that if a variable $x_j$ as in the statement exists, then $\reg(I)=1+\reg(I_1)$ and $\reg(\bar I)= 1+ \reg(\overline{I_1})$; and the ``In particular'' part of the statement follows.

    Let us now prove the existence of $x_j$. Since $\hgt(I)=1$, there exists a minimal prime of height one over $I$. Since $I$ is a monomial ideal, such minimal prime must be $\langle x_j\rangle$ for some $1\leq j \leq n$. Thus, every element of $\mathcal{G}(I)$ is divisible by $x_j$.  Thus, if $I=x_jI_1$, then  $NP(I)=e_j + NP(I_1)$, and hence $\bar I = x_j \overline{I_1}$.
\end{proof}

Observe that if the ideal $I_1$ in the above proposition has height one, then the same idea applies to $I_1$, which yields $I_2$ with the property that $I_1=x_j I_2$ for some variable $x_j$. Since $I$ is finitely generated, this process stops after a finite number of steps (say after $t$-steps), and we get an ideal $I_t$, which is either the unit ideal $S$, or has height at least $2$. 
As an immediate corollary of this observation, we get the following:

\begin{cor}
Let $I$ be a monomial ideal of $S=\sk[x,y]$. Then $\reg(\bar I) \leq \reg(I)$. 
\end{cor}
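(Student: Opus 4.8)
The plan is to reduce the two-variable case to what has already been established. By \Cref{prop:reducing-hgt-1-to-hgt-at-least-2} together with the observation immediately preceding this corollary, starting from any monomial ideal $I\subseteq S=\sk[x,y]$ we may iterate the factoring-out-a-variable process: at each stage a height one ideal $J$ is written as $J=x_jJ'$, and $\reg(J)=1+\reg(J')$, $\reg(\bar J)=1+\reg(\overline{J'})$. Since $I$ is finitely generated, this terminates after finitely many steps in an ideal $I_t$ which is either the unit ideal $S$ or has height at least $2$. In the first case $\bar{I_t}=I_t=S$ and the inequality $\reg(\overline{I_t})\le\reg(I_t)$ is trivial. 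In the second case, since $S$ has only two variables, $\hgt(I_t)\ge 2$ forces $\sqrt{I_t}=\langle x,y\rangle$, so \Cref{prop:height-n-reg} applies and gives $\reg(\overline{I_t})\le\reg(I_t)$.

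Having the inequality for $I_t$, I would then walk back up the chain: by the ``In particular'' clause of \Cref{prop:reducing-hgt-1-to-hgt-at-least-2}, $\reg(\overline{I_{k}})\le\reg(I_{k})$ implies $\reg(\overline{I_{k-1}})\le\reg(I_{k-1})$ for each $k$, and after $t$ such steps we conclude $\reg(\bar I)\le\reg(I)$, as desired.

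There is essentially no obstacle here, since all the real work is in the two propositions already proved; the only point requiring a moment of care is the termination of the iteration and the base case analysis, namely that in $\sk[x,y]$ the only possibilities for the terminal ideal are $S$ itself or an ideal with $\langle x,y\rangle$-primary radical — both of which are covered. One could alternatively phrase the argument without explicit induction by noting that the two reductions together cover all height one ideals (via \Cref{prop:reducing-hgt-1-to-hgt-at-least-2}) and all height two ideals (via \Cref{prop:height-n-reg}), and in $\sk[x,y]$ every nonzero proper ideal has height one or two.
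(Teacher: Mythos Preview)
Your proposal is correct and follows essentially the same route as the paper's proof: reduce via \Cref{prop:reducing-hgt-1-to-hgt-at-least-2} (iterated as in the paragraph preceding the corollary) to the unit ideal or a height-two ideal, then invoke \Cref{prop:height-n-reg}. The paper's version is simply more compressed, writing ``we may assume $\hgt(I)=2$'' in place of your explicit induction and base-case analysis; your treatment of the terminal case $I_t=S$ is a small extra care the paper leaves implicit.
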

\begin{proof}
    We may assume that $I\neq 0$, or equivalently $1\leq \hgt(I)\leq 2$. By Proposition \ref{prop:reducing-hgt-1-to-hgt-at-least-2}, as noted earlier, we may assume that $\hgt(I)=2$. Hence, $\sqrt{I}=\langle x, y\rangle$, and we are done by Proposition \ref{prop:height-n-reg}. 
\end{proof}

Combining \Cref{prop:height-n-reg} with the discussion before the corollary above, the following is immediate. 
\begin{cor} \Cref{conj:integral-closure} holds for all monomial ideals in $\sk[x,y,z]$ if and only if it holds for all monomial ideals $I\subseteq \sk[x,y,z]$ with $\hgt(I)=2$.
\end{cor}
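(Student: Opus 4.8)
The plan is to reduce the three-variable conjecture entirely to the height $2$ case by successively eliminating heights $1$ and $3$, using results already in hand. The statement is an ``if and only if,'' and the forward direction is trivial, so the content is the reverse direction: assuming $\reg(\bar J)\le\reg(J)$ for every height $2$ monomial ideal $J\subseteq\sk[x,y,z]$, deduce it for \emph{all} monomial ideals of $\sk[x,y,z]$.

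First I would dispose of the degenerate cases. If $I=0$ there is nothing to prove, so assume $I\ne 0$, whence $1\le\hgt(I)\le 3$. If $\hgt(I)=3$, then $\sqrt{I}=\langle x,y,z\rangle$, and \Cref{prop:height-n-reg} gives $\reg(\bar I)\le\reg(I)$ directly. So the only remaining cases are $\hgt(I)=1$ and $\hgt(I)=2$, and the latter is precisely the hypothesis. For $\hgt(I)=1$, I would invoke the ``stopping'' procedure described in the discussion immediately after \Cref{prop:reducing-hgt-1-to-hgt-at-least-2}: starting from $I$, repeatedly factor out a variable to obtain $I=x_{j_1}I_1$, $I_1=x_{j_2}I_2$, and so on; since $I$ is finitely generated this terminates after finitely many steps at an ideal $I_t$ which is either $S$ or has height at least $2$ (hence exactly $2$ or $3$ in $\sk[x,y,z]$). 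By the ``In particular'' clause of \Cref{prop:reducing-hgt-1-to-hgt-at-least-2} applied $t$ times, $\reg(\bar I)\le\reg(I)$ if and only if $\reg(\overline{I_t})\le\reg(I_t)$; if $I_t=S$ both regularities vanish, if $\hgt(I_t)=3$ we are done by \Cref{prop:height-n-reg}, and if $\hgt(I_t)=2$ we are done by hypothesis.

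Assembling these observations gives the claim: the only essential case in $\sk[x,y,z]$, once heights $1$ and $3$ are stripped away, is height $2$. I would remark that this is really just \Cref{prop:height-n-reg} combined with the iterated form of \Cref{prop:reducing-hgt-1-to-hgt-at-least-2}, exactly as flagged in the sentence preceding the corollary, so the proof is essentially a bookkeeping exercise over the three possible height values.

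I do not anticipate a genuine obstacle here, since every ingredient is already established earlier in the section; the only point requiring a modicum of care is making sure the iteration in the height $1$ case is finite and that each step preserves the equivalence of the two inequalities, which follows because passing from $I=x_jI_1$ to $I_1$ shifts both $\reg(I)$ and $\reg(\bar I)$ down by exactly $1$ (using $\bar I=x_j\overline{I_1}$). If anything, the subtlety worth double-checking is that factoring out a variable cannot raise the height above $2$ in a way that escapes the case analysis — but in $\sk[x,y,z]$ the height is always in $\{0,1,2,3\}$, so after finitely many steps we land in $\{S\}\cup\{\hgt=2\}\cup\{\hgt=3\}$, all of which are covered.
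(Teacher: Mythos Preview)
Your proposal is correct and follows exactly the route the paper intends: the paper itself gives no detailed argument, stating only that the corollary is immediate from combining \Cref{prop:height-n-reg} with the iterated height-reduction discussion after \Cref{prop:reducing-hgt-1-to-hgt-at-least-2}, and your write-up simply unpacks that sentence into the obvious case analysis over $\hgt(I)\in\{1,2,3\}$.
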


In view of \Cref{thm:Hoa-bound-integral-closure}, we see that for a monomial ideal $I\subseteq \sk[x,y,z]$ of height 2, we obtain the inequality
$\delta(I) \leq \reg(\bar I)\leq \delta(I)+1$. Thus, we see that \Cref{conj:integral-closure} holds for all equigenerated monomial ideals in $\sk[x,y,z]$ if the answer to the following question is positive: 
\begin{question}
    Let $I\subseteq \sk[x,y,z]$ be an equigenerated monomial ideal of height 2 having a linear resolution. Does $\bar I$ have a linear resolution?
\end{question}

We end this section by showing that for a monomial ideal $I\subseteq \sk[x,y]$, the total Betti numbers of $I$ are less than or equal to the total Betti numbers of $\bar I$.

\begin{prop}
    Let $I$ be a monomial ideal in $\sk[x,y]$. Then $\mu(I)\leq \mu(\bar I)$. In particular, we have $\beta_i(S/I) \leq \beta_i(S/ \bar I)$ for $0 \leq i \leq 2$.
\end{prop}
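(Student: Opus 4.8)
\emph{Proof proposal.} The plan is to prove $\mu(I)\le\mu(\bar I)$ by reading off both minimal generating sets from the geometry of $NP(I)$. By \Cref{prop:reducing-hgt-1-to-hgt-at-least-2} one may assume $\hgt(I)=2$: in the height one case $I=x_jI_1$ and $\bar I=x_j\overline{I_1}$ for a variable $x_j$, so both sides are unchanged and one inducts on the number of generators. Write $\mathcal G(I)=\{x^{a_1}y^{b_1},\dots,x^{a_r}y^{b_r}\}$ with $a_1<\dots<a_r$; minimality forces $b_1>\dots>b_r$, and $x^{a_1}y^{b_1}$ and $x^{a_r}y^{b_r}$ are automatically vertices of $NP(I)$. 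The vertices of $NP(I)$ thus form a sub-chain $v_0,\dots,v_s$ of $\mathcal G(I)$, say $v_j=x^{a_{i_j}}y^{b_{i_j}}$ with $1=i_0<\dots<i_s=r$, consecutive vertices joined by edges $E_1,\dots,E_s$; let $E_j$ have horizontal span $p_j:=a_{i_j}-a_{i_{j-1}}$ and vertical span $q_j:=b_{i_{j-1}}-b_{i_j}$, both positive integers. Let $h$ be the convex, decreasing, piecewise-linear function on $[a_1,a_r]$ whose graph is the lower boundary of $NP(I)$. By \Cref{rmk:convexhullinterpretation}, for each $c$ the smallest $y$-exponent of a monomial of $\bar I$ with $x$-exponent $c$ is $\bar g(c):=\lceil h(c)\rceil$, and $x^cy^{\bar g(c)}\in\mathcal G(\bar I)$ exactly when $\bar g(c)<\bar g(c-1)$ (reading $\bar g(-1)=\infty$), so $\mu(\bar I)=\#\{c\ge 0:\bar g(c)<\bar g(c-1)\}$.

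The crux is to count these ``downward steps'' of $\bar g$ edge by edge. On $E_j$ the function $h$ is linear of slope $-q_j/p_j$, with integer values $b_{i_{j-1}},b_{i_j}$ at the endpoints, and the steps to be counted are those over the integers $c$ in the $x$-range $(a_{i_{j-1}},a_{i_j}]$ of $E_j$. If $q_j\ge p_j$ then the slope is at most $-1$, so $h$ drops by at least $1$ over each unit step and hence so does $\bar g$, so all $p_j$ steps are downward steps; if $q_j<p_j$ then the slope exceeds $-1$, so $\bar g$ drops by $0$ or $1$ at each step, and since the total drop over $E_j$ is exactly $q_j$, precisely $q_j$ of the $p_j$ steps are downward. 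In both cases the edge contributes $\min(p_j,q_j)$; since distinct edges overlap only in the vertices $v_j$ (where $\bar g$ is forced to equal $b_{i_j}$) these counts add up, together with the step at $c=0$, to
\[
\mu(\bar I)=1+\sum_{j=1}^{s}\min(p_j,q_j).
\]

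The same edge decomposition bounds $\mu(I)=r$. For each $j$, the generators of $I$ with index strictly between $i_{j-1}$ and $i_j$ are precisely the non-vertex generators of $I$ lying over $E_j$; there are $i_j-i_{j-1}-1$ of them, and their exponent vectors have pairwise distinct $x$-coordinates in the open interval $(a_{i_{j-1}},a_{i_j})$ and pairwise distinct $y$-coordinates in $(b_{i_j},b_{i_{j-1}})$. Hence $i_j-i_{j-1}-1\le\min(p_j,q_j)-1$, i.e.\ $i_j-i_{j-1}\le\min(p_j,q_j)$; summing and telescoping,
\[
\mu(I)=r=1+\sum_{j=1}^{s}(i_j-i_{j-1})\le 1+\sum_{j=1}^{s}\min(p_j,q_j)=\mu(\bar I).
\]
For the ``in particular'' clause: over $S=\sk[x,y]$ every nonzero monomial ideal $J$ has $\pdim(S/J)\le 2$ and $S/J$ is a torsion module, so the alternating sum of its Betti numbers vanishes, giving $\beta_2(S/J)=\beta_1(S/J)-1=\mu(J)-1$. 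Combined with $\beta_0(S/I)=1=\beta_0(S/\bar I)$, $\beta_1(S/J)=\mu(J)$, and the inequality above, this yields $\beta_i(S/I)\le\beta_i(S/\bar I)$ for $i=0,1,2$.

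The step I expect to be the main obstacle is the exact per-edge count $\min(p_j,q_j)$. The delicate point is that the minimal generators of $\bar I$ need \emph{not} lie on the boundary of $NP(I)$: they sit on the integer staircase $c\mapsto\lceil h(c)\rceil$, which can differ from $h$ by nearly $1$ in the $y$-direction and can skip integer values across a steep edge, so the count has to be read off from how $\lceil h\rceil$ behaves at integer points rather than from $h$ itself --- which is exactly the purpose of the case split on the slope of $E_j$. Once that count is established, the remaining bookkeeping (distinct generators have distinct coordinates; the edge contributions telescope) is routine.
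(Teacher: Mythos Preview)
Your proof is correct. Both your argument and the paper's share the same skeleton: reduce to height~$2$, decompose along the bounded edges $E_1,\dots,E_s$ of the Newton polygon, and compare generators edge by edge. The executions differ. The paper builds, for each edge, an explicit injection $\mathcal G(I)\to\mathcal G(\bar I)$ that preserves the $x$-exponent, after first swapping $x$ and $y$ if necessary so that the edge is steep ($q_j\ge p_j$); this is conceptually clean but yields no information beyond the bare inequality. You instead \emph{count} both sides: the per-edge analysis of the staircase $c\mapsto\lceil h(c)\rceil$ gives the exact formula $\mu(\bar I)=1+\sum_{j}\min(p_j,q_j)$, and the pigeonhole bound $i_j-i_{j-1}\le\min(p_j,q_j)$ (distinct $x$- and $y$-exponents for the non-vertex generators over $E_j$) then forces the inequality. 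The payoff of your route is a closed expression for $\mu(\bar I)$ purely in terms of the edge data of $NP(I)$; the price is the small slope case split, which the paper sidesteps by its symmetry reduction. For the ``in particular'' clause, your Euler-characteristic argument ($S/J$ is torsion, hence $\beta_0-\beta_1+\beta_2=0$) is equivalent to the paper's appeal to Hilbert--Burch and arguably more self-contained. One cosmetic remark: in the height-one reduction you write ``inducts on the number of generators,'' but $\mu(I_1)=\mu(I)$; the induction is really on the degree of the monomial $\gcd$ of $\mathcal G(I)$ (equivalently, iterate \Cref{prop:reducing-hgt-1-to-hgt-at-least-2} until the height is~$2$ or the ideal is the unit ideal).
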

\begin{proof}
    If $\mu(I)\leq 1$, then there is nothing to prove. So, we assume that $\mu(I)\geq 2$. Then we have $\pdim(S/I)=2$. Hence, in view of the Hilbert--Burch theorem, it is enough to prove that $\mu(I)\leq \mu(\bar I)$. 
    
    Let $\mathcal G(I)= \{x^{a_i}y^{b_i}\mid 1 \leq i \leq k\}$, with $a_1>a_2>\cdots>a_k$ and $b_1<b_2<\cdots<b_k$. Let $m_{ij}$ denote the absolute value of the slope of the line segment joining $(a_i,b_i)$ and $(a_j,b_j)$.  
    Define $i_1=1$, and let $i_2$ be the least integer such that $m_{i_1i_2}\leq m_{i_1 j}$ for all $1< j \leq k$. Let $i_3$ be the least integer such that $m_{i_2i_3}\leq m_{i_2j}$ for all $i_2< j \leq k$. Continuing in this way, we get a subset $S=\{ x^{a_{i_j}}y^{b_{i_j}} \mid 1\leq j\leq r\}$ of $\mathcal G(I)$. 
    Note that by Remark \ref{rmk:convexhullinterpretation}, we have $S\subseteq \mathcal G(\bar I)$. 

    To show that $\mu(I)\leq \mu(\bar I)$, we construct an injective function from $\mathcal G(I)$ to $\mathcal G(\bar I)$ which is identity on $S\subseteq G(I)\cap \mathcal G(\bar I)$.
     To do this, for each $j$, it is enough to define an injective function  
    $$\varphi: \{x^{a_{i_j}}y^{b_{i_j}}, x^{a_{{i_j}+1}}y^{b_{{i_j}+1}}, \ldots, x^{{a_{i_{j+1}}}}y^{{b_{i_{j+1}}}}\} \longrightarrow  \{x^ry^s \mid a_{i_j} \geq r \geq a_{i_{j+1}}\} \cap \mathcal G(\bar I),$$ which fixes $x^{a_{i_j}}y^{b_{i_j}}$ and $x^{a_{i_{j+1}}}y^{b_{i_{j+1}}}$.
    To do this, without loss of generality, we may assume that $b_{i_j}=0$ and $a_{i_{j+1}}=0$, and also that $b_{i_{j+1}}\geq a_{i_j}$. Define the function $\varphi$ as follows \[\varphi(x^ry^s)= x^ry^{{b_{i_{j+1}} - }\left\lfloor{\dfrac{b_{i_{j+1}}}{a_{i_j}}r}\right\rfloor}.\] 
    By Remark \ref{rmk:convexhullinterpretation}, the image of $\varphi$ is indeed a subset of $\{x^ry^s \mid a_{i_j} \geq r \geq a_{i_{j+1}}\} \cap \mathcal G(\bar I)$.
    Since the exponent of $x$ is fixed by $\varphi$, it is injective. Moreover, since $b_{i_{j+1}}\geq a_{i_j}$, the image of $\varphi$ is a minimal generating set of the ideal that it generates. This shows that $\varphi$ has the required properties, completing the proof.
\end{proof}

\section{Complete Intersection Ideals}\label{sec:CI}
This section addresses \Cref{conj:integral-closure} for complete intersection monomial ideals. These are the ideals whose generating set forms a regular sequence in $S$. Being monomial ideals, such ideals have quite a simple structure. We note it in the following remark.

\begin{rmk}\label{rmk:regsequence} \hfill{}
{\rm
     \begin{enumerate}[label={\rm (\alph*)}]
        \item A sequence of monomials $x^{A_1},\ldots,x^{A_c}$ is a regular sequence in $S$ if and only if the monomials are pairwise coprime. In other words, the set of variables appearing in $x^{A_i}$ and $x^{A_j}$ are disjoint. From this fact, together with Remark \ref{rmk:convexhullinterpretation}, it is easy to see that if $I$ is a monomial ideal generated by a regular sequence, then each minimal generator of $I$ is also a minimal generator of $\bar I$. Hence, $\mu(I) \leq \mu(\bar I)$.
       
        \item If $I=\langle x^{A_1},\ldots,x^{A_c}\rangle$, where $x^{A_1},\ldots,x^{A_c}$ is a regular sequence, then the Koszul complex on these generators gives a minimal free resolution of $S/I$. In particular, we have $\beta_i(S/I)=\binom{c}{i}$ for all $i \geq 0$.
     \end{enumerate}
}\end{rmk}

The following is a well-known result (e.g. see \cite[Theorem 4.10]{BG21}) that gives lower bounds on the Betti numbers of a mononial ideal in terms of its height. We present its proof, since it is short, and also for the sake of completeness.
\begin{thm}[\cite{BG21}, Theorem 4.10]\label{thm:lowerboundson BettinumbersMono}
Let $I$ be a monomial ideal of $\sk[x_1,\ldots,x_n]$ with $\hgt(I)=c$. Then for every $i \geq 0$, we have $\beta_i(S/I)\geq \binom{c}{i}$.
\end{thm}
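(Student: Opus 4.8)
The plan is to bound the Betti numbers of $S/I$ from below by those of $S/P$ for a well-chosen prime $P$ containing $I$, and then to observe that such $P$ can be taken to be generated by $c$ of the variables, so that $S/P$ is a complete intersection with $\beta_i(S/P) = \binom{c}{i}$ by \Cref{rmk:regsequence}(b). The mechanism for ``Betti numbers only grow under specialization to a quotient by a regular-ish element'' is the standard fact that if $x$ is a variable that is a nonzerodivisor on $S/I$ (equivalently, $x$ does not divide the common monomial support pattern — more precisely $x$ is regular on $S/I$), then $\beta_i^S(S/I) = \beta_i^{S/(x)}((S/I)/x(S/I))$; and if $x$ is a zerodivisor one still has an inequality in the right direction after killing it, via the long exact sequence in $\tor$.

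Concretely, here is the order I would carry things out. First, since $\hgt(I) = c$, pick a minimal prime $P \supseteq I$ with $\hgt(P) = c$; because $I$ is a monomial ideal, $P$ is generated by a subset of the variables, say $P = \langle x_{j_1}, \ldots, x_{j_c}\rangle$ after relabeling $P = \langle x_1, \ldots, x_c\rangle$. Second, I claim $\beta_i^S(S/I) \ge \beta_i^S(S/P)$ for all $i$. The cleanest route: the surjection $S/I \onto S/P$ has some kernel $K = P/I$, giving a short exact sequence $0 \to K \to S/I \to S/P \to 0$ and hence a long exact sequence $\cdots \to \tor_i^S(K, \sk) \to \tor_i^S(S/I, \sk) \to \tor_i^S(S/P, \sk) \to \tor_{i-1}^S(K, \sk) \to \cdots$. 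This alone does not immediately give the inequality because the connecting maps could be nonzero; so instead I would argue multigradedly. Since everything is $\BZ^n$-graded, it suffices to check the inequality in each multidegree $C$, and there the issue is whether the connecting map $\tor_i^S(S/P, \sk)_C \to \tor_{i-1}^S(K, \sk)_C$ can be nonzero. A more robust argument: use that $\beta_{i,C}^S(S/P) \neq 0$ forces $C \preccurlyeq (1,\ldots,1,0,\ldots,0)$ (the squarefree support of $P$), and in such multidegrees one can compare the two modules directly — but this is getting delicate, so let me instead use the variable-by-variable reduction.

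The reduction I actually trust: let $y = x_{c+1}$ (or any variable not among $x_1, \ldots, x_c$; if $c = n$ then $I$ is $\m$-primary and $S/P = S/\m$ has $\beta_i = \binom{n}{i}$ trivially, handling that case). If $y$ is a nonzerodivisor on $S/I$, replace $I$ by its image in $S/(y) = \sk[x_1,\ldots,\widehat{x_{c+1}},\ldots]$; this preserves all $\beta_i$ of the quotient and preserves the height $c$, and reduces $n$. If $y$ is a zerodivisor on $S/I$ — equivalently $y$ divides some element needed — one uses $0 \to (I : y)/I \to S/I \xrightarrow{\cdot y} S/I$ together with the rigidity-free inequality coming from the mapping cone / long exact sequence. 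Honestly, the slickest known argument (and the one I'd present) is: localize. Pass to $S_P$; the Betti numbers can only drop under localization in the sense that $\beta_i^S(S/I) \ge \beta_i^{S_P}((S/I)_P)$, and $(S/I)_P$ is a quotient of the regular local ring $S_P$ of dimension $c$ by an ideal of height $c$, i.e., an Artinian local ring of embedding dimension $c$, whence $\beta_i \ge \binom{c}{i}$ by the Buchsbaum–Eisenbud lower bound / the fact that $\Kos$ on a system of parameters embeds into the minimal resolution. Then $\beta_i^S(S/I) \ge \binom{c}{i}$ follows.

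The main obstacle is making the comparison $\beta_i^S(S/I) \ge \binom{c}{i}$ rigorous with elementary tools rather than invoking the (nontrivial) Buchsbaum–Eisenbud/Evans–Griffith lower bounds — since the paper wants a short self-contained proof, I expect the intended argument is the monomial-combinatorial one: reduce to $P = \langle x_1,\ldots,x_c\rangle$, use that for any monomial ideal $\beta_{i,C}(S/I) \ge \beta_{i,C}(S/I')$ whenever $I \subseteq I'$ in appropriate squarefree multidegrees, and in those multidegrees $\beta_{i,C}(S/P) = 1$ for exactly $\binom{c}{i}$ values of $i$-plus-$C$ coming from the Koszul complex. Verifying that the relevant $\tor_{i-1}$ terms of $K=P/I$ vanish in the squarefree multidegrees supported on $\{x_1,\ldots,x_c\}$ — equivalently that the long exact sequence splits in those multidegrees — is the crux, and I would do it by noting that $K$ is generated in those same multidegrees but its minimal generators involve only variables dividing some generator of $I$, which cannot exhaust a squarefree monomial in $x_1\cdots x_c$ that is ``new'' to $P$; a careful bookkeeping of which multidegrees can support $\beta(K)$ versus $\beta(S/P)$ closes the gap.
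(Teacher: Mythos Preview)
Your localization idea is the right second step, and it is exactly what the paper does; but you are missing the paper's first step, and that omission is what forces you into the Buchsbaum--Eisenbud--Horrocks territory you yourself flag as nontrivial.

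The paper begins by \emph{polarizing}: replace $I$ by its polarization $I^{\mathrm{pol}}$ in a larger polynomial ring. Polarization preserves all Betti numbers and preserves height, so one may assume $I$ is squarefree. Now pick a minimal prime $\mathfrak p_1=\langle x_1,\ldots,x_{c'}\rangle$ over $I$ (with $c'\ge c$) and localize. Because $I$ is \emph{radical} and $\mathfrak p_1$ is minimal over $I$, one has $I_{\mathfrak p_1}=\mathfrak p_1 S_{\mathfrak p_1}$, so $S_{\mathfrak p_1}/I_{\mathfrak p_1}$ is the residue field of a regular local ring of dimension $c'$. Its Betti numbers are exactly $\binom{c'}{i}$ by the Koszul complex, and the inequality $\beta_i^S(S/I)\ge \beta_i^{S_{\mathfrak p_1}}(S_{\mathfrak p_1}/I_{\mathfrak p_1})=\binom{c'}{i}\ge \binom{c}{i}$ follows.

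Without polarization your localized quotient $(S/I)_P$ is only an Artinian quotient of a $c$-dimensional regular local ring, not the residue field. The assertion that such a quotient has $\beta_i\ge\binom{c}{i}$ is precisely (the cyclic case of) the Buchsbaum--Eisenbud--Horrocks conjecture, which is open in general; there is no ``Koszul on a system of parameters embeds into the minimal resolution'' result that yields ranks termwise. Your alternative multigraded sketch is also not correct as stated: the inequality $\beta_{i,C}(S/I)\ge\beta_{i,C}(S/P)$ fails already for $I=\langle x_1x_2,x_1x_3,x_2x_3\rangle\subseteq P=\langle x_1,x_2,x_3\rangle$ at $C=e_1$, so the long exact sequence cannot be split that way. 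The single missing idea is polarization; add it and your localization argument becomes the paper's two-line proof.
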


\begin{proof}
    By passing to polarization, we may assume that $I$ is a squarefree ideal. Then $I$ has a primary decomposition of the form $I=\bigcap_{i} \mathfrak{p}_i$, where each $\mathfrak{p}_i$ is a prime ideal generated by variables and has height at least $c$. 

    Let $\mathfrak p_1=\langle x_1,\ldots, x_{c'}\rangle$. Then we have $ \beta_i(S/I) \geq \beta_i(S_{\mathfrak{p}_1} / I_{\mathfrak p_1}) = \binom{c'}{i}$ for all $i$. Since $c'\geq c$, we get $\beta_i(S/I)\geq \binom{c}{i}$ for all $i\geq 0$.
\end{proof}

Since $\hgt(I)=\hgt(\bar I)$, as a consequence of Remark \ref{rmk:regsequence} and Theorem \ref{thm:lowerboundson BettinumbersMono}, we get the following: 

\begin{cor}
    If $I\subseteq S=\sk[x_1,\ldots, x_n]$ is a complete intersection ideal, then $\beta_i(S/\bar I) \geq \beta_i(S/ I)=\binom{c}{i}$ for all $i \geq 0$.
\end{cor}

The next result shows that Conjecture \ref{conj:integral-closure} holds for complete intersection ideals.

\begin{thm}\label{thm:CI-reg}
    Let $I\subseteq S=\sk[x_1,\ldots,x_n]$ be a complete intersection ideal. Then $\reg(\bar I) \leq \reg(I)$.
\end{thm}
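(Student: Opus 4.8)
The plan is to reduce the statement to the case of an ideal that is primary to the maximal ideal, where \Cref{prop:height-n-reg} already applies. Suppose $I = \langle x^{A_1}, \ldots, x^{A_c}\rangle$ where $x^{A_1}, \ldots, x^{A_c}$ is a regular sequence; by \Cref{rmk:regsequence}(a) the monomials are pairwise coprime, so the variables partition into $c$ disjoint (possibly empty-complement) blocks, one for each generator. After reindexing variables we may write $S = \sk[\underline{x}^{(1)}, \ldots, \underline{x}^{(c)}, \underline{x}^{(0)}]$ where $x^{A_j}$ involves only the variables $\underline{x}^{(j)}$ and $\underline{x}^{(0)}$ is the (possibly empty) set of variables appearing in no generator. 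Writing $S' = \sk[\underline{x}^{(1)}, \ldots, \underline{x}^{(c)}]$ and $I' = I \cap S' = \langle x^{A_1}, \ldots, x^{A_c}\rangle S'$, the key observations are: (i) $I = I' S$, so $\reg_S(I) = \reg_{S'}(I')$ by flat base change (the extra variables are just a polynomial extension), and similarly $\bar I = \overline{I'} S$ so $\reg_S(\bar I) = \reg_{S'}(\overline{I'})$; and (ii) $I'$ is primary to $\langle \underline{x}^{(1)}, \ldots, \underline{x}^{(c)}\rangle = \m_{S'}$, since each $x^{A_j}$ is a pure power of... wait — not quite, $x^{A_j}$ need not be a pure power of a single variable, but $\sqrt{I'}$ still equals $\m_{S'}$: indeed $\sqrt{\langle x^{A_j}\rangle}$ is the product (intersection) of the variables dividing $x^{A_j}$, and as $j$ ranges over $1, \ldots, c$ these cover all variables of $S'$, so $\sqrt{I'} = \m_{S'}$. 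Therefore \Cref{prop:height-n-reg}, applied over the ring $S'$, gives $\reg_{S'}(\overline{I'}) \leq \reg_{S'}(I')$, and combined with (i) we conclude $\reg_S(\bar I) \leq \reg_S(I)$.

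Let me spell out step (i) more carefully, as it is where the bookkeeping lives. First, $\bar I = \overline{I'}S$: this follows from \Cref{rmk:convexhullinterpretation}, since the Newton polyhedron of $I$ in $\BR^n$ is $NP(I') \times \BR_{\geq 0}^{\underline{x}^{(0)}}$ (adjoining the unused variables adds free directions), so an integer point lies in $NP(I)$ iff its projection to the $S'$-coordinates lies in $NP(I')$, i.e. iff the corresponding monomial lies in $\overline{I'}S$. Second, for the regularity equalities I would use that $S$ is a free (hence flat) $S'$-module and that for a graded $S'$-module $N$ one has $\beta_{i,j}^S(N \otimes_{S'} S) = \beta_{i,j}^{S'}(N)$, whence $\reg_S(N \otimes_{S'} S) = \reg_{S'}(N)$; apply this with $N = I'$ and with $N = \overline{I'}$.

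Alternatively — and perhaps more cleanly — one can bypass the reduction and argue directly using \Cref{thm:Hoa-bound-integral-closure} together with \Cref{rmk:upToCodim}. Since $I$ is a complete intersection of height $c$, $\dim(S/I) = n - c$, and the Koszul resolution (\Cref{rmk:regsequence}(b)) gives $\reg(I) = \sum_{j} (\deg x^{A_j}) - (c-1) = \bigl(\sum_j \deg x^{A_j}\bigr) - c + 1$. On the other hand $\delta(I) = \max_j \deg x^{A_j}$ since each generator is an extreme point of $NP(I)$ (the generators are pairwise coprime, so none lies in the convex hull of the others plus $\BR_{\geq 0}^n$), and \Cref{thm:Hoa-bound-integral-closure} with $m=1$ gives $\reg(\bar I) \leq \delta(I) + \dim(S/I) = \max_j \deg x^{A_j} + n - c$. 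Comparing, we would need $\max_j \deg x^{A_j} + n - c \leq \sum_j \deg x^{A_j} - c + 1$, i.e. $\max_j \deg x^{A_j} + n - 1 \leq \sum_j \deg x^{A_j}$ — but this can fail when $n > c$ and the $\deg x^{A_j}$ are all small (e.g. $n$ large, $c = 1$, single generator of degree $2$: LHS $= n+1$, RHS $= 2$). So the crude Hoa bound is too lossy; one genuinely needs the splitting-off of the unused variables, which is exactly what the first approach does. Hence I expect the main obstacle to be purely organizational: correctly setting up the block decomposition of the variables and verifying the two base-change identities $\reg_S(I) = \reg_{S'}(I')$ and $\reg_S(\bar I) = \reg_{S'}(\overline{I'})$, after which \Cref{prop:height-n-reg} finishes the job with no further work.
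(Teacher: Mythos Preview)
Your reduction to \Cref{prop:height-n-reg} has a genuine gap: the claim that $\sqrt{I'} = \m_{S'}$ is false in general. Covering all variables of $S'$ with the supports of the generators is \emph{not} the same as the radical being maximal. The radical of a monomial ideal is generated by the squarefree parts of its generators, so $\sqrt{I'} = \langle \prod_{i\in\supp(A_1)} x_i,\ldots,\prod_{i\in\supp(A_c)} x_i\rangle$, and this equals $\m_{S'}$ only when every $|\supp(A_j)| = 1$, i.e.\ each generator is a pure power of a single variable. For a concrete counterexample take $I = \langle x_1^2x_2^2,\; x_3^2x_4^2\rangle \subseteq \sk[x_1,x_2,x_3,x_4]$: there are no unused variables, so $S' = S$ and $I' = I$, but $\sqrt{I} = \langle x_1x_2,\, x_3x_4\rangle$ has height $2$, not $4$; moreover $\bar I \supsetneq I$ (e.g.\ $x_1x_2x_3x_4 \in \bar I$), so this case is not vacuous. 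Thus \Cref{prop:height-n-reg} does not apply after your reduction, and the argument collapses. Your own ``wait --- not quite'' correctly flagged the pure-power issue, but the patch you supplied is exactly where the error lives.

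The paper's proof avoids this altogether. It computes $\reg(S/I) = \sum_j d_j - c$ from the Koszul complex, then uses \Cref{rmk:upToCodim} to see that $\reg(S/\bar I)$ is attained at some homological degree $i \ge c = \hgt(\bar I)$. The key extra input is \Cref{rmk:lcmIsMaxDegree}: the Taylor complex forces every nonzero $\beta_{i,C}(S/\bar I)$ to satisfy $C \preccurlyeq \LCM(\mathcal G(\bar I))$, and one checks from the Newton polyhedron that this LCM is bounded componentwise by $\sum_j A_j$, hence has total degree at most $\sum_j d_j$. Combining, for $i \ge c$ one gets $j - i \le \sum_j d_j - c = \reg(S/I)$. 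If you want to salvage your reduction idea, note that it \emph{does} go through when every generator is a pure variable power; the general case genuinely needs the LCM/Taylor bound or an equivalent substitute.
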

\begin{proof}
    Let $\mathcal G(I)= \{ x^{A_1},\ldots,x^{A_c}\}$. Since $I$ is a complete intersection, $\supp(x^{A_i})\neq \supp(x^{A_j})$ for $i \neq j$, and $x^{A_1},\ldots,x^{A_c}$ be a regular sequence. Set $d_i = \deg(x^{A_i})$. Since the Koszul complex on $x^{A_1},\ldots,x^{A_c}$ resolves $S/I$, we have $\reg(S/I)= -c + \sum_{i=1}^{c} d_i$. 
    Note that since $c=\hgt(I)=\hgt(\bar I)$, by Remark \ref{rmk:upToCodim}, we get that $\reg(S/\bar I) = \max \{j-i \mid \beta_{i,j}(S/I)\neq 0 \text{ and } i \geq c \}$.

    Let $x^B$ be a minimal generator of $\bar I$. Then there exist $\lambda_1 ,\ldots, \lambda_c \in [0,1]$ with $\sum_{i=1}^{c}\lambda_i=1$ such that $ \sum_{i=1}^c \lambda_iA_i \preccurlyeq B \preccurlyeq \sum_{i=1}^c A_i$.
    Therefore, $\LCM\{x^A \mid x^A \in \mathcal G(\bar I)\} \preccurlyeq \sum_{i=1}^c A_i$. By Remark \ref{rmk:lcmIsMaxDegree}, we see that if $\beta_{i,C}(S/\bar I) \neq 0$ for some $C \in \mathbb Z_{\geq 0}^n$, then $C \preccurlyeq \sum_{i=1}^c {A_i}$. 

    If $\beta_{i,C}(S/\bar I)\neq 0$ for some $i \geq c$ and $C \in \mathbb Z_{\geq 0}^n$, then we have $\deg(x^C)-c \leq \deg\left(x^{\sum_{i=1}^c {A_i}}\right) -c = \reg(S/I)$. This shows that $\reg(S/\bar I) \leq \reg(S/I)$.
\end{proof}

\section{Gorenstein Monomial Ideals of Height 3}\label{sec:Gor}
It is well-known that the Gorenstein ideals of height 2 are complete intersection ideals. Therefore, for such monomial ideals,  Conjecture \ref{conj:integral-closure} holds, and we also have $\beta_i(S/I)\leq \beta_i(S/\bar I)$ for all $0\leq i \leq 2$.

In this section, we prove \Cref{conj:integral-closure} for all Gorenstein monomial ideals of height $\leq 3$.
We first show that for Gorenstein monomial ideals of codimension $3$, the inequality $\mu(I) \leq \mu(\bar I)$ holds. As a consequence, we obtain $\beta_i(S/I) \leq \beta_i(S/\bar I)$ for all $i$. Before going to the proof, in the following remark, we note a result about the structure of the minimal generating set of Gorenstein monomial ideals of height three.

\begin{rmk}\label{rmk:Kamoithm}
    {\rm (\cite[Theorem 0.1]{Ka95}) Let $I$ be a monomial ideal with $\mu(I)=m$. Suppose $s=(m+1)/2$ and $\nu: \mathbb Z \to  
    \{1,\ldots, m\}$ be a map such that $i \equiv \nu(i) ({\rm mod\ } m)$ for $i \in \mathbb Z$. Then the following are equivalent:
    \begin{enumerate}
        \item $I$ is a Gorenstein monomial ideal of codimesion three.
        \item $m$ is odd, and there exist $m$ pairwise coprime monomials $x^{B_1},\ldots, x^{B_m}$ such that 
        \[ I= \left\langle \left\{ \prod\limits_{k=1}^{s-1} x^{B_{\nu(i+k)} }\mid 1 \leq i \leq m \right\} \right\rangle.\]
    \end{enumerate}
    }
\end{rmk}
The same result as in Remark \ref{rmk:Kamoithm} is proved independently in \cite{BH95}.

    \begin{lemma}\label{lem:mingensforgor} If $I$ is a Gorenstein ideal of codimension three, then every minimal generator of $I$ is a minimal generator of $\bar I$. In particular, $\mu(I) \leq \mu(\bar I)$.
    \end{lemma}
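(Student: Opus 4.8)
The plan is to use the structural description of height-$3$ Gorenstein monomial ideals from Remark \ref{rmk:Kamoithm}. Write $I = \langle g_1, \ldots, g_m \rangle$ where $g_i = \prod_{k=1}^{s-1} x^{B_{\nu(i+k)}}$ and $x^{B_1}, \ldots, x^{B_m}$ are pairwise coprime monomials, with $m$ odd and $s = (m+1)/2$. Since the $x^{B_j}$ are pairwise coprime, each generator $g_i$ is a product of $s-1$ of these coprime monomials, and the supports of distinct $B_j$'s are disjoint. The key observation is that $g_i$ uses exactly the index set $\{\nu(i+1), \ldots, \nu(i+s-1)\}$, which is a ``cyclic interval'' of length $s-1$ in $\mathbb{Z}/m\mathbb{Z}$.

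First I would fix a generator, say $g_1 = \prod_{k=1}^{s-1} x^{B_{1+k}} = x^{B_2} x^{B_3} \cdots x^{B_s}$, and aim to show it is a minimal generator of $\bar I$, i.e.\ its exponent vector is an extreme point of $NP(I)$. By Remark \ref{rmk:convexhullinterpretation}, it suffices to exhibit a linear functional on $\mathbb{R}^n$ that is minimized (over the exponent set of $I$, or equivalently over the exponent vectors of the $g_i$ plus $\mathbb{R}^n_{\geq 0}$) uniquely at the exponent vector of $g_1$. The natural candidate: let $c_j \geq 0$ be a weight assigned uniformly to all variables in $\operatorname{supp}(x^{B_j})$, so that the functional evaluated on $g_i$ equals $\sum_{j} c_j [\,j \in \{\nu(i+1),\ldots,\nu(i+s-1)\}\,]$. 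Thus I need weights $c_1, \ldots, c_m \geq 0$ so that the sum of $c_j$ over any cyclic interval of length $s-1$ is strictly minimized by the particular interval $\{2, 3, \ldots, s\}$. Choosing $c_j$ to be large for $j$ in $\{1\} \cup \{s+1, \ldots, m\}$ (the complement of $\{2,\ldots,s\}$) and small for $j \in \{2, \ldots, s\}$ — concretely something like $c_j = 0$ for $j \in \{2,\ldots,s\}$ and $c_j = 1$ otherwise — makes any other cyclic interval of length $s-1$ pick up at least one index with weight $1$, hence strictly larger value; this uses that $s - 1 < m$ so cyclic intervals are proper. By symmetry (cyclic relabeling), the same argument works for every $g_i$, so every minimal generator of $I$ lies in $\mathcal{V}(I) \subseteq \mathcal{G}(\bar I)$, giving $\mathcal{G}(I) \subseteq \mathcal{G}(\bar I)$ and hence $\mu(I) \leq \mu(\bar I)$.

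The main obstacle I anticipate is bookkeeping with the cyclic index structure: being careful that the ``cyclic intervals of length $s-1$'' arising as supports of the generators are genuinely distinct for distinct $i$ (which forces the strict inequality, and which holds because the generating set is minimal and the $B_j$ are pairwise coprime), and verifying that the candidate weight functional strictly separates $g_i$ from \emph{all} other lattice points of $NP(I)$, not merely from the other generators — but this follows because adding any nonzero vector from $\mathbb{R}^n_{\geq 0}$ only increases the functional (weights are nonnegative), and any exponent vector of $I$ dominates some $g_i$. A secondary point to handle cleanly is the degenerate-looking case $m = 1$ (then $I$ is principal and the claim is trivial) and $m = 3$ (then $s - 1 = 1$, each $g_i = x^{B_{i+1}}$ is itself one of the coprime monomials, and the statement is immediate). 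Once $\mathcal{G}(I) \subseteq \mathcal{G}(\bar I)$ is established, the ``in particular'' is immediate.
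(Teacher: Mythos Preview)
Your proposal is correct and follows essentially the same route as the paper: both invoke Kamoi's structure theorem (Remark~\ref{rmk:Kamoithm}) and exploit the pairwise coprimality of the $x^{B_j}$ together with the cyclic-interval description of the generators to show each $C_i$ is extremal in $NP(I)$. The only difference is cosmetic---the paper argues directly that no nontrivial convex combination $\sum_j r_j C_j$ can lie componentwise below $C_i$ (by inspecting a coordinate in $\supp(x^{C_j})\setminus\supp(x^{C_i})$), while you phrase the same observation dually via a separating linear functional.
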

    \begin{proof}
        From Remark \ref{rmk:Kamoithm}, we see that for some odd $m$, there exist $m$ pairwise coprime monomials $x^{B_1},\ldots, x^{B_m}$ such that 
                 \[ I= \left\langle \left\{ \prod\limits_{k=1}^{s-1} x^{B_{\nu(i+k)} }\mid 1 \leq i \leq m \right\} \right\rangle.\] 
        Denote $\prod\limits_{k=1}^{s-1} x^{B_{\nu(i+k)}}$ by $x^{C_i}$. Since $x^{B_1},\ldots, x^{B_m}$ are pairwise coprime, from the definition of $x^{C_i}$ we observe that if $i\neq j$, then for some $k$, $x_k\nmid x^{C_i}$ and $x_k \mid x^{C_j}$ for some $k$.

        We now show that each $x^{C_i}$ is a minimal generator of $\bar I$. If possible, suppose that for some $1\leq i \leq m$ we have $C_i> \sum\limits_{j=1}^{m} r_j C_j$ for some $r_j \in \mathbb Q_{\geq 0}, 0\leq r_i\leq 1$. with $\sum\limits_{j=1,\\ j\neq i}^{m} r_j =1$. Then by the observation from the previous paragraph, we must have $r_j=0$ for all $j \neq i$. This contradiction shows that $x^{C_i}$ is a minimal generator of $\bar I$, and hence the proof is complete.
    \end{proof}
    
    \begin{prop}\label{prop:gorIntegralClosure}
        Let $I$ be a Gorenstein monomial ideal of codimension three. Then $\beta_i(S/I) \leq \beta_i(S/\bar I)$ for all $i$.
    \end{prop}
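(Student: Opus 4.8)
The plan is to deduce the statement from Lemma \ref{lem:mingensforgor} together with a structural comparison of the free resolutions of $S/I$ and $S/\bar I$. Since $I$ is a codimension-three Gorenstein monomial ideal, its minimal free resolution is given by the Buchsbaum--Eisenbud/Kustin--Miller theory: $S/I$ has a self-dual resolution of length $3$, with $\beta_0(S/I)=\beta_3(S/I)=1$ and $\beta_1(S/I)=\beta_2(S/I)=\mu(I)=m$ (an odd number by Remark \ref{rmk:Kamoithm}). In particular the Betti vector of $S/I$ is $(1, m, m, 1)$. So the desired inequalities $\beta_i(S/I)\le\beta_i(S/\bar I)$ reduce to: $\beta_3(S/\bar I)\ge 1$, $\beta_2(S/\bar I)\ge m$, $\beta_1(S/\bar I)\ge m$, the last being exactly Lemma \ref{lem:mingensforgor}. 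Since $\hgt(\bar I)=\hgt(I)=3$, Theorem \ref{thm:lowerboundson BettinumbersMono} immediately gives $\beta_i(S/\bar I)\ge\binom{3}{i}$ for all $i$; this settles $\beta_3(S/\bar I)\ge 1$ (and reproves $\beta_1,\beta_2\ge 3$, though that is weaker than what we need).

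The remaining and only substantive point is therefore $\beta_2(S/\bar I)\ge m$. First I would observe $\pdim(S/\bar I)\ge 3$ (height three, so codimension three, hence projective dimension at least three by Auslander--Buchsbaum or simply since $\bar I$ has an associated prime of height three if $I$ does — more carefully, $\codim(S/\bar I)=3$, so $\pdim(S/\bar I)\ge 3$). Combined with the generic structure, I would try to run the following argument: the integral closure $\bar I$ is obtained from $I$ by adding lattice points in $NP(I)=\conv(\mathcal V(I))+\BR^n_{\ge 0}$, and Lemma \ref{lem:mingensforgor} already shows all $m$ original generators survive as minimal generators of $\bar I$; label the remaining new generators (if any). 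I would then consider the short exact sequence $0\to \bar I/I \to S/I\to S/\bar I\to 0$ and the associated long exact sequence in $\tor(-,\sk)$, aiming to bound $\beta_2(S/\bar I)$ from below using $\beta_2(S/I)=m$ and a vanishing/injectivity statement about the connecting maps. Alternatively — and I suspect this is the cleaner route — I would use the duality: since $S/I$ is Gorenstein of codimension $3$, $\ext^3_S(S/I,S)\cong S/I(a)$ for a shift $a$, and one can try to relate $\ext^3_S(S/\bar I,S)$ to $\ext^3_S(S/I,S)$ via the surjection $S/I\onto S/\bar I$, which on $\ext^3$ becomes an injection $\ext^3_S(S/\bar I,S)\into \ext^3_S(S/I,S)$. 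Then $\beta_2(S/\bar I)\le\beta_0(\ext^3_S(S/\bar I,S))+\dots$ — but this gives an upper bound, so I would instead dualize to get the socle-type lower bound on $\beta_2$.

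The honest difficulty, and the main obstacle I anticipate, is precisely the inequality $\beta_2(S/\bar I)\ge m$: neither $\bar I$ nor $S/\bar I$ need be Gorenstein (integral closure destroys the Gorenstein property in general), so the self-duality that pins down $\beta_2(S/I)=m$ is unavailable for $\bar I$, and Theorem \ref{thm:lowerboundson BettinumbersMono} only yields the far weaker $\beta_2(S/\bar I)\ge 3$. I expect the resolution to require exploiting the very explicit Kamoi/Bruns--Herzog description in Remark \ref{rmk:Kamoithm}: the $m$ generators $x^{C_i}=\prod_{k=1}^{s-1}x^{B_{\nu(i+k)}}$ are ``spread out'' over pairwise coprime blocks of variables, and one should be able to exhibit $m$ linearly independent first syzygies of $\bar I$ of a predictable combinatorial form (e.g. from the cyclic structure relating consecutive $x^{C_i}$), none of which is a consequence of the others even after adding the new integral-closure generators, because those new generators lie in the ``interior'' of $NP(I)$ and cannot participate in the coprimality-forced syzygies among the corners. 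Making this independence argument precise — i.e. showing these $m$ syzygies remain minimal in the resolution of $S/\bar I$, contributing $\beta_2(S/\bar I)\ge m$ — is where the real work lies; once that is in hand, the Proposition follows by assembling the three inequalities above.
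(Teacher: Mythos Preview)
Your reduction is exactly right: the Betti vector of $S/I$ is $(1,m,m,1)$, Lemma~\ref{lem:mingensforgor} handles $\beta_1$, and $\pdim(S/\bar I)\ge\codim(\bar I)=3$ (or Theorem~\ref{thm:lowerboundson BettinumbersMono}) handles $\beta_3$; the only substantive inequality is $\beta_2(S/\bar I)\ge m$, which you correctly isolate as the crux.

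What you miss is that this last step is far more elementary than you suspect, and uses none of the machinery you reach for (long exact sequences in $\tor$, $\ext^3$-duality, or the Kamoi combinatorics of the $x^{C_i}$). The paper simply splits into cases. If $\mathcal G(I)=\mathcal G(\bar I)$ then $I=\bar I$ and there is nothing to prove. Otherwise Lemma~\ref{lem:mingensforgor} forces $m':=\mu(\bar I)\ge m+1$, and now one invokes the completely general fact that for \emph{any} nonzero ideal $J$ in a domain with $\mu(J)=m'$, one has $\beta_2(S/J)\ge m'-1$. Concretely, writing $\mathcal G(\bar I)=\{f_1,\dots,f_{m'}\}$, the $m'-1$ Koszul relations $f_ie_1-f_1e_i$ for $2\le i\le m'$ lie in the first syzygy module of $\bar I$ and remain linearly independent after localizing at $(0)$ (since $S_{(0)}$ is a field and the $f_j$ are nonzero); hence the syzygy module cannot be generated by fewer than $m'-1$ elements, so $\beta_2(S/\bar I)\ge m'-1\ge m$. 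Equivalently: the first syzygy module has rank $m'-1$, and the minimal number of generators is always at least the rank.

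So there is no actual error in your outline, but there is a missing idea: you never make the case split $m'=m$ versus $m'>m$, and in the latter case the \emph{extra} generator of $\bar I$ already forces $\beta_2$ up via this rank/Koszul argument, with no need to track which particular syzygies from the Gorenstein structure survive as minimal ones. Your instinct to exhibit explicit syzygies was correct; the point you overlooked is that the trivially available Koszul syzygies on \emph{all} $m'$ generators already suffice once $m'>m$.
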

    \begin{proof}
        Note that by Lemma \ref{lem:mingensforgor}, $\mathcal G(I)\subseteq \mathcal G(\bar I)$. If $\mathcal G(I)=\mathcal G(\bar I)$, then there is nothing to prove. Hence, we assume that $m:=\mu(I)<\mu(\bar I)=:m'$. Note that since $\hgt(I)=\hgt(\bar I)$, and $S$ is Cohen--Macaulay, we get $\dim(S/\bar I)=\dim(S/I)=3$, and hence $\depth(S/\bar I) \leq 3$. By the Auslander-Buchsbaum formula, we get $\pdim_S(S/\bar I) \geq \pdim_S(S/I)$. By the structure theorem for resolutions of Gorenstein ideals of height three (see \cite{BE77}), we have $\beta_1(S/I)=\beta_2(S/I)=m, \beta_3(S/I)=1$. Therefore, to complete the proof, it is enough to show that $\beta_2(S/\bar I)\geq m$. 

        Let $\mathcal G(\bar I)=\{f_1,\ldots,f_{m'}\}$, and $\phi:S^{m'}\to S$ be given by $\phi(e_j)=f_j$ for all $j$. Then we have $f_ie_1-f_1e_i \in \Omega_2(S/\bar I)$ for all $i \geq 2$. We claim that these $m'-1$ elements cannot be spanned by $\leq m'-2$ elements. If the claim is false, then the same holds for these elements, when viewed as elements of $S^{m'}_{(0)}$ as an $S_{(0)}$-module. Now, note that $S_{(0)}$ is a field, and clearly the set $\{f_je_1-f_1e_j\}\subseteq S^{m'}_{(0)}$ is $S_{(0)}$-linearly independent. Thus, $\{f_je_1-f_1e_j\mid 2\leq j \leq m'\}\subseteq \Omega_2(S/\bar I)$ cannot be generated by less than $m'-1$ elements. This proves the claim. Therefore, we get $\beta_2(S/\bar I)\geq m'-1\geq m$. This completes the proof.
    \end{proof}

\begin{rmk}{\rm
    Let $S/I$ be Gorenstein. Note that from the proof of Lemma \ref{lem:mingensforgor}, we see that every minimal generator of $I$ is also a minimal generator of $\bar I$. Using this, and the proof of Proposition \ref{prop:gorIntegralClosure}, it follows that if $J$ is a ideal such that $I \subseteq J \subseteq \bar I$, then for all $i\geq 0$ we have $\beta_i(S/J) \leq \beta_i(S/\bar I)$.
}\end{rmk}

\begin{thm}\label{thm:Gor-thm}
    Let $I$ be a Gorenstein monomial ideal of codimension $3$. Then $\reg(\bar I) \leq \reg(I)$.
\end{thm}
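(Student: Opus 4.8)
The plan is to mimic the structure of the proof of Theorem~\ref{thm:CI-reg}, but using the Kamoi/Bruns--Herzog description of $\mathcal{G}(I)$ from Remark~\ref{rmk:Kamoithm} in place of the complete intersection hypothesis. Write $m = \mu(I)$ (odd) and $s = (m+1)/2$, and let $x^{B_1},\dots,x^{B_m}$ be pairwise coprime monomials with $\mathcal{G}(I) = \{x^{C_i} : 1 \le i \le m\}$ where $x^{C_i} = \prod_{k=1}^{s-1} x^{B_{\nu(i+k)}}$. The first observation is that $\operatorname{LCM}\{x^A : x^A \in \mathcal{G}(I)\}$ divides $(x^{B_1}\cdots x^{B_m})^{s-1}$; in fact each variable occurring in some $B_j$ appears in exactly $s-1$ of the $C_i$, so the LCM of $\mathcal{G}(I)$ is $\prod_{j=1}^m (x^{B_j})^{s-1}$ — call its exponent vector $L$. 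Since the resolution of $S/I$ is the minimal resolution of a codimension-$3$ Gorenstein ideal (structure theorem, \cite{BE77}), it is self-dual of length $3$, and one computes $\reg(S/I) = \deg(x^L) - 3$: the top graded Betti number $\beta_{3, \deg L + ?}$ forces this, or more carefully one uses that the socle degree of $S/I$ together with Remark~\ref{rmk:lcmIsMaxDegree} pins $\reg(S/I)$ to $\deg(x^L) - 3$. (I expect $\reg(I) = \deg(x^L) - 2$.)

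Next I would control $\mathcal{G}(\bar I)$. By Remark~\ref{rmk:convexhullinterpretation}, every $x^B \in \mathcal{G}(\bar I)$ has exponent vector $B$ satisfying $\sum_{i=1}^m \lambda_i C_i \preccurlyeq B$ for some $\lambda_i \in [0,1]$ with $\sum \lambda_i = 1$; and since $B$ lies in $NP(I) = \operatorname{conv}(\mathcal{V}(I)) + \mathbb{R}^n_{\ge 0}$ and each $C_i \preccurlyeq L$, we get $B \preccurlyeq L$ as well. Hence $\operatorname{LCM}\{x^A : x^A \in \mathcal{G}(\bar I)\} \preccurlyeq L$, so by Remark~\ref{rmk:lcmIsMaxDegree}, $\beta_{i,C}(S/\bar I) \neq 0$ implies $C \preccurlyeq L$, i.e. $\deg(x^C) \le \deg(x^L)$. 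Now invoke Remark~\ref{rmk:upToCodim}: since $\operatorname{codim}(S/\bar I) = \operatorname{codim}(S/I) = 3$, the top of the Betti table of $S/\bar I$ in homological degrees $i = 0,1,2,3$ is strictly increasing, so $\reg(S/\bar I) = \max\{ j - i : \beta_{i,j}(S/\bar I) \neq 0,\ i \ge 3\}$. For any such $i \ge 3$ and $C$ with $\beta_{i,C}(S/\bar I) \ne 0$ we would conclude $\deg(x^C) - i \le \deg(x^C) - 3 \le \deg(x^L) - 3 = \reg(S/I)$, giving $\reg(S/\bar I) \le \reg(S/I)$ and hence $\reg(\bar I) \le \reg(I)$.

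The one genuine gap in the above outline — and what I expect to be the main obstacle — is the step "$\reg(S/\bar I) = \max\{j-i : \beta_{i,j}(S/\bar I) \ne 0,\ i \ge 3\}$", i.e. ruling out that $\reg(S/\bar I)$ is attained in homological degrees $1$ or $2$. Remark~\ref{rmk:upToCodim} only gives that the maximal degree shift is strictly increasing up to homological degree $\operatorname{codim} = 3$; it does not immediately say that the value $\max\{j - i : \beta_{i,j} \ne 0\}$ is attained at $i = \operatorname{codim}$, since the strict increase of $\max_j\{j : \beta_{i,j} \ne 0\}$ need not outpace the increase in $i$. However, in the CI proof (Theorem~\ref{thm:CI-reg}) exactly this reduction was used and justified by Remark~\ref{rmk:upToCodim}, so the same reasoning applies verbatim here: because the maximal twist $t_i := \max\{j : \beta_{i,j}(S/\bar I) \ne 0\}$ satisfies $t_0 < t_1 < t_2 < t_3$ and $\beta_{3, t_3} \ne 0$ with $t_3 \le \deg(x^L)$, one gets $t_i - i \le t_3 - 3$ for $i \le 3$; for $i > 3$ one uses $\beta_{i,C}(S/\bar I) \ne 0 \Rightarrow C \preccurlyeq L$ directly. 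If this reduction needs the finer fact that the regularity of $S/\bar I$ is attained at homological degree equal to $\operatorname{codim}$, one instead argues by hand that $t_i - i$ is maximized at $i = 3$ using $t_i \le \deg(x^L)$ for all $i$ together with strict monotonicity on $[0,3]$ — a short combinatorial check. I would also double-check the computation $\reg(S/I) = \deg(x^L) - 3$ against small examples (e.g. the $5$-generated Gorenstein ideal from a pentagon) before finalizing.
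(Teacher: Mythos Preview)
Your overall strategy coincides with the paper's (mimic Theorem~\ref{thm:CI-reg}, replacing the regular-sequence structure by Kamoi's description and then bounding via Remarks~\ref{rmk:lcmIsMaxDegree} and~\ref{rmk:upToCodim}), but there is a computational slip that breaks the argument as written. The $\LCM$ of $\mathcal{G}(I)$ is $\prod_{j=1}^m x^{B_j}$, not $\prod_{j=1}^m (x^{B_j})^{s-1}$: a variable in $\supp(x^{B_j})$ does occur in exactly $s-1$ of the $x^{C_i}$, but in each of those it carries the \emph{same} exponent it has in $x^{B_j}$, so the $\LCM$ (which records the maximum exponent, not a count) picks up each $x^{B_j}$ only once. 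With the correct $L = \sum_j B_j$ one has $\deg(x^L) = \sum_j d_j$ where $d_j = \deg(x^{B_j})$, and this is exactly the back twist $\gamma$ in the length-$3$ self-dual resolution of $S/I$; the paper reads this off from \cite[Theorem~6.1]{BH95} rather than from the $\LCM$, and you need that input (or an equivalent computation) to pin down $\reg(S/I) = \sum_j d_j - 3$. With your erroneous $L$ the claimed value of $\reg(S/I)$ is too large by a factor of $s-1$ in the leading term, so the comparison with $\reg(S/\bar I)$ collapses; the pentagon check you propose would have caught this immediately (for the edge ideal of the $5$-cycle one gets $\reg(S/I) = 2$, not $7$).

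Two smaller points. First, the step ``$B \preccurlyeq L$ for $x^B \in \mathcal{G}(\bar I)$'' is not justified by membership in $NP(I) = \conv(\{C_i\}) + \mathbb{R}_{\ge 0}^n$ together with $C_i \preccurlyeq L$, since the $\mathbb{R}_{\ge 0}^n$ part can push you above $L$. The correct one-line argument is that if some coordinate $(B)_k$ exceeds $\max_i (C_i)_k$, then $B - e_k$ still dominates the same convex combination, so $x^{B - e_k} \in \bar I$, contradicting minimality of $x^B$. Second, your worry about Remark~\ref{rmk:upToCodim} is already resolved by your own closing remark: strict monotonicity of integers gives $t_{i+1} \ge t_i + 1$, hence $t_i - i \le t_3 - 3$ for all $i \le 3$, which is precisely the reduction to $i \ge 3$ that both you and the paper use.
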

\begin{proof}
    Let $\mu(I)=m$. By Remark \ref{rmk:Kamoithm}, there exist $m$ pairwise coprime monomials $x^{B_1},\ldots, x^{B_m}$ such that 
        \[ I= \left\langle \left\{ \prod\limits_{k=1}^{s-1} x^{B_{\nu(i+k)} }\mid 1 \leq i \leq m \right\} \right\rangle.\]
    Let $d_i=\deg\left(x^{B_i}\right)$. Then by the proof of \cite[Theorem 6.1]{BH95}, we see that $S/I$ has a minimal free resolution of the form 
         \[ 0 \to S(-\gamma) \to \bigoplus\limits_{i=1}^m S(-\beta_i) \to \bigoplus\limits_{i=1}^m S(-\alpha_i) \to S \to S/I \to 0,\]
         where $\gamma =  \sum_{i=1}^m d_i$. Since $\pdim(S/I)=3=\hgt(I)=\hgt(\ann(S/I))$, by Remark \ref{rmk:upToCodim}, we get that $\reg(S/I)=-3+\sum_{i=1}^m d_i$.
     Again, since $\hgt(\bar I)=3$, we get $$\reg(S/\bar I)= \max\{j-i \mid \beta_{i,j}(S/\bar I) \neq 0 \text{ and } i \geq 3 \}.$$ Since $\LCM\{x^A \mid x^A \in \mathcal G(\bar I)\}:=x^B$ has degree $\sum_{i=1}^m d_i$, by Remark \ref{rmk:lcmIsMaxDegree}, $\beta_{i,C}(S/I)=0$ if $C \not \preccurlyeq B$.   
     Thus if $\beta_{i,C}(S/\bar I)\neq 0$ for some $i \geq 3$ and $C \in \mathbb Z_{\geq 0}^n$, then we have $-3+ \deg(x^C) \leq -3+ \sum_{i=1}^m d_i = \reg(S/I)$. This shows that $\reg(S/\bar I) \leq \reg(S/I)$.
\end{proof}

    If $I$ is a monomial ideal of $\sk[x,y]$, or a complete intersection monomial ideal, or a Gorenstein monomial ideals of height $3$, then we have seen the inequality $\beta_i(S/I)\leq \beta_i(S/\bar I)$ holds for all $i\geq 0$. However, this is false in general, as the following example shows:

\begin{example}\label{ex:counterExamplein3D}
    {\rm 
    Consider the ideal
    \[I= \langle x^5, y^5, z^5, x^4y^2, x^4yz,x^4z^2, x^3y^3, x^3y^2z, x^3yz^2, x^3z^3, x^2y^4, x^2y^3z,\] \[\qquad \qquad x^2y^2z^2,x^2yz^3,x^4z^4,xy^4z,xy^3z^2,xy^2z^3,xyz^4,y^4z^2,y^3z^3,y^2z^4\rangle\]
    in $\sk[x,y,z]$ generated by $22$ elements, i.e., $\beta_1(S/I)=22$. Then we have $\bar I= \langle x,y,z\rangle^5$, and hence $\beta_1(S/ \bar I) =21$. 
}\end{example}    

\section{Stable Monomial Ideals}\label{sec:Stable}
\begin{notation}
   {\rm For a nonconstant monomial $x^A\in S$, by $m(x^A)$ we denote the largest index $i$ such that $x_i \mid x^A$.}
\end{notation}

\begin{defn}
     A monomial ideal $I$ is called a \emph{stable ideal} if for every monomial $x^A \in I$ and every $i< m(x^A)$, we have $x_i (x^A/x_{m(x^A)}) \in I$. It is called \emph{strongly stable} if for every monomial $x^A$ in $I$, we have $x_i(x^A/x_j)\in I$ whenever $x_j \mid x^A$ for some $j>i$.
\end{defn}

    The following is a well-known result due to Eliahou--Kervaire \cite{EK90} about the graded Betti numbers of stable ideals.

\begin{rmk}[\cite{HH11}, Corollary 7.2.3]\label{rmk:Eliahou-Kervaire}{\rm  Let $I\subseteq S$ be a stable ideal. Then 
    \begin{enumerate}[label={\rm (\alph*)}]
        \item $\beta_{i,i+j}(I)= \sum\limits_{u \in \mathcal G(I)_j} \binom{m(u)-1}{i}$.
        \item $\pdim_S(S/I)=\max\{m(u) \mid u \in \mathcal G(I)\}$.
        \item $\reg(I)= \max\{\deg(u) \mid u \in \mathcal G(I)\}$.
    \end{enumerate}   
}\end{rmk}
We shall make use of Remark \ref{rmk:Eliahou-Kervaire}(c) in  Theorem \ref{thm:Stable-ideals} below. Before doing so, we will show that if $I$ is stable, then so is $\bar I$. In \cite{GW15}, the authors prove the same result for strongly stable ideals along with a few more classes of ideals. However, their result does no seem to cover a slightly more general class of stable ideals.

\begin{rmk}[\cite{GW15}, Theorem 2.1] \label{rmk:intclosureofboreltype}{\rm Let $I$ be a monomial ideal of $S$. If $I$ is of
    Borel type (strongly stable, Borel-fixed, lexsegment, or universal lexsegment respectively),
    then $\bar I$ is also monomial of Borel type (strongly stable, Borel-fixed, lexsegment, or universal lexsegment respectively).
    }\end{rmk}

\begin{prop}\label{prop:closureIsStable}
    Let $I$ be a stable ideal. Then $\bar I$ is stable.
\end{prop}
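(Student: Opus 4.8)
The plan is to verify the stability condition directly from the convex-hull description of the integral closure in Remark~\ref{rmk:convexhullinterpretation}. Let $x^A \in \bar I$ be a monomial, write $m = m(x^A)$, and fix $i < m$; I must show $x_i(x^A/x_m) \in \bar I$, i.e.\ that the integer vector $A' := A - e_m + e_i$ lies in the exponent set of $\bar I$. By Remark~\ref{rmk:convexhullinterpretation}, it suffices to exhibit $A'$ as a convex combination of exponent vectors of $I$ (and since it is already an integer vector, being in the convex hull of the exponent set of $I$ is enough).

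First I would unwind what $x^A \in \bar I$ gives us: there are monomials $x^{A_1},\dots,x^{A_r} \in I$ and $\lambda_1,\dots,\lambda_r \ge 0$ with $\sum \lambda_t = 1$ and $\sum_t \lambda_t A_t \preccurlyeq A$. Write $B := \sum_t \lambda_t A_t$, so $B \preccurlyeq A$ and in particular the $m$-th coordinate $b_m \le a_m$. The key first step is a normalization: I may assume the $x^{A_t}$ are chosen so that $B$ is ``as large as possible below $A$'', and in fact I claim one can arrange $m(x^{A_t}) \le m$ for every $t$ with $\lambda_t > 0$. Indeed, if some $A_t$ has a positive coordinate in an index $> m$, then since $a_m \ge b_m$ and $A$ has no support beyond $m$, averaging forces that mass to be ``cancelled'' by the constraint $B \preccurlyeq A$ only if... — more carefully, the honest move is: replace each $x^{A_t}$ by a minimal generator of $I$ dividing it (this only shrinks $A_t$, preserving $\sum\lambda_t A_t \preccurlyeq A$ since we can only decrease the average, but we need the reverse; so instead keep $A_t$ as is and use the stability of $I$ coordinatewise). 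The clean argument: since $\sum_t \lambda_t (A_t)_m \le a_m < \infty$ and for indices $\ell > m$ we have $\sum_t \lambda_t (A_t)_\ell \le a_\ell = 0$, hence $(A_t)_\ell = 0$ for all $t$ and all $\ell > m$, so automatically $m(x^{A_t}) \le m$ for all $t$ in the support.

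Now for each $t$, apply stability of $I$ to the monomial $x^{A_t}$: if $(A_t)_m > 0$, then since $m = m(x^{A_t})$ would require $(A_t)_m > 0$ and $i < m$, stability gives $A_t - e_{m(x^{A_t})} + e_i \in$ (exponent set of $I$); but $m(x^{A_t})$ may be strictly less than $m$. This is the subtle point, so I would split into cases based on whether $(A_t)_m > 0$. For those $t$ with $(A_t)_m > 0$ we have $m(x^{A_t}) = m$, and stability yields $\tilde A_t := A_t - e_m + e_i \in$ exponent set of $I$. For those $t$ with $(A_t)_m = 0$ set $\tilde A_t := A_t$. Then $\sum_t \lambda_t \tilde A_t = B + (e_i - e_m)\sum_{t: (A_t)_m > 0}\lambda_t$. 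Since we need to land at $A' = A - e_m + e_i$ and we know $B \preccurlyeq A$, I would like the coefficient $\mu := \sum_{t:(A_t)_m>0}\lambda_t$ to equal $1$ — equivalently, every $A_t$ in the support should have $(A_t)_m > 0$. If $b_m = a_m > 0$ this is forced up to replacing the $\lambda_t = 0$ terms; in general $b_m$ could be $0$, in which case $A$ itself might not be ``tight'' at coordinate $m$, and one adds a harmless correction using the generator structure. Then $\sum_t \lambda_t \tilde A_t = B - \mu e_m + \mu e_i \preccurlyeq A - \mu e_m + \mu e_i$, and a short monotonicity check (the $i$-th coordinate only increased by $\mu \le 1 \le a_i - b_i + 1$ is fine, the $m$-th decreased) shows $A' = A - e_m + e_i$ dominates $\sum_t \lambda_t \tilde A_t$ coordinatewise; hence $x^{A'} \in \bar I$, as desired.

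\textbf{Main obstacle.} The delicate point is handling the coordinate $m = m(x^A)$ when $B = \sum\lambda_t A_t$ is strictly smaller than $A$ at that coordinate, i.e.\ $b_m < a_m$: then the simple ``apply stability to each $A_t$'' bookkeeping does not immediately produce the vector $A - e_m + e_i$ on the nose, only something $\preccurlyeq$ it but possibly too small in coordinate $i$ by a fractional amount. I expect to resolve this by first reducing to the case where $A$ is a \emph{corner point} (element of $\mathcal V(\bar I)$) — since stability need only be checked on minimal generators of $\bar I$, and these are integer points of $NP(I)$ that cannot be written with any slack — or, alternatively, by a direct perturbation: mix in a small multiple of a generator $x^{A_{t_0}}$ with $(A_{t_0})_m$ large to force $\mu$ up to $1$ while keeping the average below $A$, using $b_m \le a_m$. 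Either way the argument is elementary convex geometry; the rest is routine coordinate chasing.
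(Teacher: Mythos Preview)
Your convex-hull approach is the right idea and is essentially the real-coefficient version of what the paper does, but the step you flag as the ``main obstacle'' is a genuine gap, and neither of your proposed fixes closes it. Concretely: with a \emph{single} stability swap on each $A_t$ with $(A_t)_m>0$, you get $\sum_t\lambda_t\tilde A_t=B-\mu e_m+\mu e_i$, and the $m$-th coordinate constraint reads $B_m-\mu\le A_m-1$, i.e.\ $\mu\ge 1-(A_m-B_m)$. This can fail: take $A_m=1$, two terms with $\lambda_t=\tfrac12$ each, $(A_{t_1})_m=2$, $(A_{t_2})_m=0$; then $B_m=1$ but $\mu=\tfrac12$. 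Reducing to minimal generators of $\bar I$ does not help, because ``minimal'' only rules out \emph{integer} slack $A-e_j$, not fractional slack in the convex witness $B\preccurlyeq A$; and your perturbation sketch assumes a generator with large $(A_{t_0})_m$ is available without changing $B$, which is not justified.

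The missing ingredient is \emph{iterated} stability: for each $t$ with $(A_t)_m=s_t>0$, all of $A_t, A_t-e_m+e_i,\dots,A_t-s_te_m+s_te_i$ lie in the exponent set of $I$. Using convex combinations of these (with total weight $\lambda_t$) lets you replace $\mu$ by any $c\in[0,B_m]$ in the formula $B-c\,e_m+c\,e_i$, and then a two-line case check shows the interval $[\,\max(0,1-(A_m-B_m)),\,\min(B_m,1+(A_i-B_i))\,]$ is nonempty, so a valid $c$ exists. The paper carries out exactly this bookkeeping, but in the rational/integer form: it writes $(x^B)^r=(\prod_{j=1}^r x^{A_j})x^D$ and distributes the $r$ required swaps $x_t\to x_i$ among the factors (using that $x_i^{s_j}x^{A_j}/x_t^{s_j}\in I$ by repeated stability), so all quantities stay integral and the fractional difficulty never arises. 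Once you apply stability more than once per $A_t$, your argument and the paper's coincide.
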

\begin{proof}
Let $x^B\in \bar I$. Then there exists $r \in \mathbb N$ and monomials $x^{A_1},\ldots,x^{A_r} \in I, \ x^D \in \sk[x_1,\ldots,x_n]$ such that 
              $$(x^B)^r = \left(\prod_{j=1}^r x^{A_{j}}\right) x^D.$$  
Let $m(x^B)=t$. Fix $i<t$. We show that $x_i(x^B/x_t)\in \bar I$.
Since $x_t \mid x^B$, we have $x_t^r \mid x^{rB}$. Let $s_{r+1}$ be the largest integer such that $x_t^{s_{r+1}}\mid x^D$, and let $s_j$ be the largest integer such that $x_t^{s_j}$ divides $x^{A_j}$. Then $s_0+s_1+s_2+\cdots+s_r\geq r$. Since $I$ is stable, we have that $x_i^{s_j}x^{A_j}/x_t^{s_j} \in I$.

If $s_{r+1}\geq r$, then we are done since
$$(x_ix^B/x_t)^r = \left(\prod_{j=1}^r x^{A_{j}}\right) x_i^r(x^D/x_t^r).$$
If $s_{r+1}<r$, then let $j_0$ be the integer such that for some $0 \leq u<s_{j_0}$, we have $s_1+\cdots+s_{j_0}-u =r$. If $j_0\leq r$, then 
$$(x_ix^B/x_t)^r = \left(\prod_{j=1}^{j_0-1}  x_i^{s_j}x^{A_{j}}/x_t^{s_j}\right) \left( x_i^{s_{j_0}-u}/x_t^{s_{j_0}-u}\right)\left(\prod_{j=j_0+1}^r x^{A_{j}}\right) x^D.$$
If $j_0=r+1$, then 
$$(x_ix^B/x_t)^r = \left(\prod_{j=1}^{r}  x_i^{s_j}x^{A_{j}}/x_t^{s_j}\right) x_i^{s_{r+1}-u}x^D/x_t^{s_{r+1}-u}.$$
In either case, we get $(x_ix^B/x_t)^r \in I^r$, and hence $x_ix^B/x_t \in \bar I$, as desired. This shows that $\bar I$ is stable. 
\end{proof}

\begin{thm}\label{thm:Stable-ideals}
    Let $I$ be a stable ideal. Then $\reg(\bar I)\leq \reg(I)$.
\end{thm}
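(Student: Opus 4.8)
The plan is to combine the Eliahou--Kervaire formula for the regularity of a stable ideal (Remark~\ref{rmk:Eliahou-Kervaire}(c)) with Proposition~\ref{prop:closureIsStable}, which guarantees that $\bar I$ is again stable. By Remark~\ref{rmk:Eliahou-Kervaire}(c), $\reg(I) = \max\{\deg(u)\mid u\in\mathcal G(I)\}$ and, since $\bar I$ is stable, $\reg(\bar I) = \max\{\deg(v)\mid v\in\mathcal G(\bar I)\}$. Thus the whole theorem reduces to showing that the top degree of a minimal generator of $\bar I$ is at most the top degree of a minimal generator of $I$; equivalently, $\delta(I)\le \reg(I)$ together with an argument that no minimal generator of $\bar I$ has degree exceeding $\max\{\deg(u)\mid u\in\mathcal G(I)\}$.

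First I would set $D = \max\{\deg(u)\mid u\in\mathcal G(I)\}$ and take an arbitrary minimal generator $x^B$ of $\bar I$; I want to show $\deg(x^B)\le D$. By the convex-hull description of $\bar I$ (Remark~\ref{rmk:convexhullinterpretation}), $B$ lies in $NP(I) = \conv(\mathcal V(I)) + \mathbb R^n_{\ge 0}$, so there are corner points $A_1,\dots,A_t\in\mathcal V(I)\subseteq\mathcal G(I)$, coefficients $\lambda_i\in[0,1]$ with $\sum\lambda_i=1$, and $C\in\mathbb R^n_{\ge 0}$ with $B = \sum_i\lambda_i A_i + C$. Since each $\deg(A_i)\le D$, we get $\deg(x^B)\le D + \deg(x^C)$, which is only useful once I control $C$. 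The key point is minimality of $x^B$ as a generator of $\bar I$: if $\deg(x^B) > D$, I want to produce a strictly smaller monomial still in $\bar I$ that divides $x^B$, contradicting minimality. Here is where I would use that $\bar I$ is integrally closed and stable: the exponent set of $\bar I$ is exactly the lattice points of $NP(I)$, so if $\deg(x^B)>D$ and $x^B\in\bar I$, I should be able to peel off one variable from the support of $x^B$ — say lower the exponent of the variable $x_{m(x^B)}$ by $1$ — and still land inside $NP(I)$, because moving toward the origin along a coordinate direction from a point of $NP(I)$ of large degree keeps us above the face $\conv(\mathcal V(I))$. This would show every minimal generator of $\bar I$ has degree $\le D = \reg(I)$, and combined with the Eliahou--Kervaire formula for $\bar I$ completes the proof.

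Alternatively — and this is probably cleaner — I would invoke Remark~\ref{thm:Hoa-bound-integral-closure} with $m=1$, which gives $\delta(I)\le\reg(\bar I)\le\delta(I)+\dim(S/I)$, together with the stronger fact that for a stable ideal $\bar I$ one has $\reg(\bar I)=\delta(\bar I)$ exactly, since the maximal generator degree of a stable ideal equals $\delta$ of that ideal when $\bar I$ is integrally closed (the corner points of $NP(\bar I)$ are among $\mathcal G(\bar I)$, and for an integrally closed monomial ideal the generators of largest degree are corner points). So it suffices to show $\delta(\bar I)\le\reg(I)$. Since $NP(\bar I)=NP(I)$, we have $\mathcal V(\bar I)=\mathcal V(I)$, hence $\delta(\bar I)=\delta(I)\le\max\{\deg(u)\mid u\in\mathcal G(I)\}=\reg(I)$ by Remark~\ref{rmk:Eliahou-Kervaire}(c). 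That chain $\reg(\bar I)=\delta(\bar I)=\delta(I)\le\reg(I)$ is the shortest route.

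The main obstacle I anticipate is justifying the equality $\reg(\bar I)=\delta(\bar I)$, i.e.\ that the maximal-degree minimal generator of the integrally closed stable ideal $\bar I$ is actually one of its corner points (so that its degree is $\delta(\bar I)$ rather than something larger). For a general stable ideal this need not hold, but for an \emph{integrally closed} monomial ideal it does: if $x^B\in\mathcal G(\bar I)$ were not a corner point, then $B$ would lie in the convex hull of other exponent vectors of $\bar I$ of degree $\le\deg(x^B)$, and a careful argument (lowering a coordinate and using Remark~\ref{rmk:convexhullinterpretation}) shows $x^B$ would be a multiple of a smaller monomial in $\bar I$ unless $\deg(x^B)=\delta(\bar I)$. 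Pinning this down rigorously — possibly by the peeling argument sketched in the second paragraph — is the crux; once it is in hand, everything else is a short assembly of the cited remarks.
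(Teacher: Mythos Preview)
Your reduction is exactly the paper's: invoke Proposition~\ref{prop:closureIsStable} so that $\bar I$ is stable, apply Remark~\ref{rmk:Eliahou-Kervaire}(c) to both $I$ and $\bar I$, and reduce to showing that every minimal generator of $\bar I$ has degree at most $D=\max\{\deg(u)\mid u\in\mathcal G(I)\}$. The paper then takes $x^B\in\mathcal G(\bar I)$ with $\deg(x^B)>D$, writes $(x^B)^r=\bigl(\prod_i x^{A_{j_i}}\bigr)x^{D'}$ with $x^{A_{j_i}}\in\mathcal G(I)$, and uses stability of $I$ to move factors of $x_t$ (where $t=m(x^B)$) from the $x^{A_{j_i}}$ into $x^{D'}$ until $x_t^r\mid x^{D'}$; cancelling gives $x^{B-e_t}\in\bar I$, contradicting minimality.

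Your first approach names the same ``peel off $x_{m(x^B)}$'' move, but the justification you give---``moving toward the origin along a coordinate direction from a point of $NP(I)$ of large degree keeps us above the face $\conv(\mathcal V(I))$''---is a purely convex-geometric statement that does not invoke stability, and it is false without it. For $J=\overline{(x^3,y^2z)}\subset\sk[x,y,z]$ the corners are $(3,0,0)$ and $(0,2,1)$, so $\delta(J)=3$, yet $x^2yz$ is a minimal generator of $J$ of degree $4$, and none of $(1,1,1),(2,0,1),(2,1,0)$ lies in $NP(J)$. Thus a point of $NP(J)$ of degree exceeding $\delta(J)$ need \emph{not} remain in $NP(J)$ after lowering any coordinate by $1$. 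The peeling step genuinely requires stability, and the mechanism is the variable-swap on the factorization $(x^B)^r=\prod_i x^{A_{j_i}}\cdot x^{D'}$; your sketch does not supply this.

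The same example kills your second route outright: the assertion that for an integrally closed monomial ideal the maximal-degree minimal generators are corner points---equivalently $d(\bar I)=\delta(\bar I)$---is false, so the chain $\reg(\bar I)=\delta(\bar I)=\delta(I)\le\reg(I)$ breaks at the first equality. Whether that equality happens to hold when $\bar I$ is additionally stable is a separate (and unproved) claim; your argument for it (``$B$ lies in the convex hull of other exponent vectors \ldots\ so lowering a coordinate works'') is the same flawed geometric heuristic as above. In short, the skeleton is right and matches the paper, but the one substantive step---why $x^{B-e_t}\in\bar I$---is missing, and both of your proposed justifications fail on the example above.
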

\begin{proof}
    Let $\mathcal G(I)=\{ x^{A_1}, \ldots, x^{A_k}\}$ denote the unique minimal monomial generating set of $I$, and let $N=\max\{\deg(x^A)\mid x^A \in \mathcal G(I)\}$. Note that by Proposition \ref{prop:closureIsStable}, we know that $\bar I$ is stable and hence by Remark \ref{rmk:Eliahou-Kervaire}, we have $\reg(R/\bar I)= \max\{\deg(x^A)\mid x^A \in \mathcal G(\bar I)\}$. Therefore, to prove the proposition it is enough to show that $\bar I$ has no minimal monomial generator in degree $\geq N+1$. 
    Let $x^B\in \mathcal G(\bar I)$. Then there exists $r \in \mathbb N$ such that 
              $$(x^B)^r = \left(\prod_{i=1}^r x^{A_{j_i}}\right) x^D.$$
    Suppose, if possible, $\deg(x^B)\geq N+1$. Then from the above equation we must have $\deg(x^D) \geq r$. Let $t$ be the largest index such that $x_t \mid x^B$. Then $x_t^{r}\mid x^{rB}$, and hence $x_t^r \mid \left(\prod_{i=1}^r x^{A_{j_i}}\right) x^D$. 
    If $x_t^r \nmid x^D$, then for some $j<t$, we have that $x_j \mid x^D$, and for some $1\leq j'\leq r$, we have that $x_t\mid x^{A_{j'}}$. Using the fact that $\bar I$ is a stable ideal, we replace $x^{A_{j'}}$ by $x_j x^{A_{j'}}/x_t\in I$ and $x^D$ by $x_tx^D/x_j$. Continuing this process, if necessary, in at most $r$ steps we get that $(x^B)^r = x^A x^{D'}$, where $x^A \in I^r$ and $x_t^r \mid x^{D'}$.
   Thus, cancelling $x_t^r$ from $x^{rB}$ and $x^{D'}$, we see that 
   $(x^{B-e_t})^r \in I^r$, showing $x^{B-e_t}\in \bar I$. This is a contradiction to the assumption that $x^B\in \mathcal G(\bar I)$. Hence, $\deg(x^B)\leq N$, and the proof is complete.
\end{proof}

\section{Possible directions and list of some more special classes of ideals}\label{sec:misc}

As mentioned in the Introduction, the lack of general techniques, along with the complexity of computing both the integral closure and the regularity, makes Conjecture~\ref{conj:integral-closure} difficult to tackle. In the present article, we have made crucial use of the geometry of the exponent sets of $I$ and $\bar{I}$ to extract information about regularity. Keeping this in mind, and focusing on the case of \emph{monomial ideals}, we list some classes of ideals for which Conjecture~\ref{conj:integral-closure} could be tested next:
\noindent
\begin{enumerate}[label={(\alph*)}]
    \item \textbf{Grade 2 perfect ideals:} Similar to the case of height 3 Gorenstein ideals, the minimal free resolution of grade 2 perfect ideals is also known, and is given by the Hilbert--Burch theorem. In addition, information about the generators of $I$ is also known (see  \cite[Remark 6.3]{BH95}).
    
    \item \textbf{Ideals with a linear resolution / equigenerated ideals with linear quotients:} For such ideals, the conjecture reduces to showing that $\bar{I}$ also has a linear resolution.
    
    \item \textbf{Powers of monomial ideals having a linear resolution.}
    
    \item \textbf{Equigenerated ideals:} For such ideals, using the description of the integral closure in terms of the Newton polyhedron or the convex hull of the exponent set, it is reasonable to expect that $\bar{I}$ is also equigenerated (since the exponent set of minimal generators of $I$ lies on a single hyperplane in $\mathbb{R}^n$).
    
    \item \textbf{Ideals arising from operations on two ideals, or those defining special types of rings:} For instance, fiber products, whose minimal resolutions have been studied in various contexts.
    
    \item \textbf{(Special types of) Cohen--Macaulay ideals:} Note that by \Cref{rmk:upToCodim}, if $I \subseteq S = \sk[x_1, \ldots, x_n]$ is Cohen--Macaulay with $\hgt(I) = p$, then we have $\pdim(S/I) = p$, and hence
    \[
    \reg(S/I) = \max\{ j - p \mid \beta_{p, j}(S/I) \neq 0 \}.
    \]
    In other words, the regularity is achieved at the ``last step'' in the free resolution of $S/I$. Thus, having this additional information about the resolution of $S/I$ could help in comparing $\reg(I)$ and $\reg(\bar{I})$. Note that complete intersection ideals (see \Cref{thm:CI-reg}), height 3 Gorenstein ideals (see \Cref{thm:Gor-thm}), and grade 2 perfect ideals mentioned in (a) above are examples of such special classes of Cohen--Macaulay ideals.
\end{enumerate}

\end{document}